\newtheorem{mydef}{Definition}
\newtheorem{mylem}{Lemma}
\newtheorem{mythm}{Theorem}
\newtheorem{mypro}{Problem}
\newtheorem{myas}{Assumption}
\newtheorem{myrem}{Remark}
\newcommand{\rfig}[1]{Fig.\,\ref{#1}} 
\newcommand{\req}[1]{\eqref{#1}} 
\newcommand{\rtab}[1]{Table\,\ref{#1}}
\newcommand{\rlem}[1]{Lemma\,\ref{#1}}
\newcommand{\rsec}[1]{Section\,\ref{#1}}
\newcommand{\rpro}[1]{Problem\,\ref{#1}}
\newcommand{\rdef}[1]{Definition\,\ref{#1}}
\newcommand{\ras}[1]{Assumption\,\ref{#1}}
\newcommand{\rthm}[1]{Theorem\,\ref{#1}}
\newcommand{\ralg}[1]{Algorithm\,\ref{#1}}
\newcommand{\qedwhite}{\hfill \ensuremath{\Box}}
\begin{document}
\title{{A symbolic approach to the self-triggered design \\ for networked control systems}}

\author{Kazumune~Hashimoto, Adnane Saoud, Masako Kishida, Toshimitsu Ushio and  Dimos~V.~Dimarogonas
\thanks{Kazumune Hashimoto and Toshimitsu Ushio are with the Graduate School of Engineering Science, Osaka University, Osaka, Japan (e-mail: kazumune@kth.se, ushio@sys.es.osaka-u.ac.jp)}
\thanks{Adnane Saoud is with Laboratoire des Signaux et Syst$\grave{e}$mes and with Laboratoire Sp${\rm \acute{e}}$cification V${\rm \acute{e}}$rification, CNRS, Paris-Saclay, France (e-mail: adnane.saoud@l2s.centralesupelec.fr)}
\thanks{Masako Kishida is with National Institute of Informatics (NII), Tokyo, Japan (e-mail: kishida@nii.ac.jp)}
\thanks{Dimos V. Dimarogonas is with the School of Electrical Engineering, KTH Royal Institute of Technology, Sweden (e-mail: dimos@kth.se) }
\thanks{The authors are supported by ERATO HASUO Metamathematics for Systems Design Project (No. JPMJER1603), JST, Labex DigiCosme (project ANR-11-LABEX-0045-DIGICOSME) operated by ANR, the Swedish Research Council (VR), the Swedish Foundation for StrategicResearch (SSF), the H2020 ERC Starting Grant BUCOPHSYS and the Knut och Alice Wallenberg Foundation (KAW).}
}

%\thanks{Kazumune Hashimoto is with the School of Fundamental Science and Engineering, .}
%\thanks{Dimos V. Dimarogonas is with the ACCESS Linnaeus Center,
%School of Electrical Engineering and Computer Science, KTH Royal Institute of Technology, 10044 Stockholm, Sweden (e-mail : dimos@ee.kth.se). His work was supported by the Swedish Research council (VR) and the Knut and Alice Wallenberg Foundation.}

\maketitle

\thispagestyle{empty}

\begin{abstract}
In this paper, we investigate novel self-triggered controllers for nonlinear control systems with reachability and safety specifications. To synthesize the self-triggered controller, we leverage the notion of symbolic models, or abstractions, which represent abstracted expressions of control systems. The symbolic models will be constructed through the concepts of approximate alternating simulation relations, based on which, and by employing a reachability game, the self-triggered controller is synthesized. We illustrate the effectiveness of the proposed approach through numerical simulations. 
%The benefits of the proposed approach are illustrated through simulation results. 
\end{abstract}
\begin{IEEEkeywords}
Self-triggered control, reachability and safety, symbolic models. 
\end{IEEEkeywords}
\section{Introduction}
\IEEEPARstart{E}{vent} and self-triggered control have been prevalent in recent years as the useful control strategies to reduce communication resources for networked control systems (NCSs)\cite{heemels2012a}. 
The key idea of these approaches is that, network transmissions from sensors to the remote controller are given based on some criteria, such as stability or some control performances. Introducing the event and self-triggered control has been proven effective, since it leads to the potential energy-savings of battery powered devices by mitigating the communication load for NCSs. 

So far, a wide variety of event and self-triggered controllers has been provided from theory to practical implementations, see, e.g., \cite{eventsurvey,event_survey} for survey papers. 
In this paper, we are particularly interested in designing a self-triggered strategy under \textit{reachability} and \textit{safety} specifications. 
In other words, our goal is to design a self-triggered controller, such that the state trajectory enters a target set in finite time (reachability), while at the same time remaining inside a safety set for all times (safety). 
 {To the best of our knowledge, event and self-triggered strategies that can accommodate reachability and safety specifications have been provided only in a few works, see, e.g., 
%only a few works to accommodate reachability and safety specifications have been reported in recent years, e.g., 
\cite{evreachable1,evreachable2,hashimoto2018b,hashimoto2017d,evreachable3,hashimoto2019b}.  
Event/self-triggered controllers based on reachability analysis or controlled invariant sets have been proposed in \cite{evreachable1,evreachable2,hashimoto2017d,evreachable3,hashimoto2018b}.  %and \cite{evreachable3}, respectively. 
%For example, in \cite{hashimoto2018b}, a self-triggered controller for linear systems has been proposed by using a contractive set, which guarantees the existence of a controller such that the state stays inside the safety set for all times. 
However, a fundamental assumption required in these previous approaches is that the safety set is \textit{convex}; in some practical applications, such as robot motion planning, safety sets are typically \textit{non-convex} due to the presence of obstacles. Hence, the above previous approaches may be limited for certain practical applications. 
A self-triggered algorithm that deals with non-convex safety sets has been proposed in \cite{hashimoto2019b}. 
In this previous work, a sufficient condition to generate a feasible communication scheduling has been derived, based on the assumption that there exists an \textit{$\delta$-ISS Lyapunov function} for the control system. However, assuming the existence of such a Lyapunov function limits the class of control systems, since it must ensure contractive behaviors between any pair of state trajectories. Therefore, designing an event/self-triggered controller that can accommodate {non-convex} safety constraints and that does not require any stability assumptions is still a challenging problem, which is our main objective and is tackled in this paper.}
%for broader class of control systems than the one presented in \cite{hashimoto2019a}, i.e., without needing any stability assumption, is still a challenging problem, which will be the main objective and will be tackled in this paper. 

In this paper, we investigate a new self-triggered controller that takes different approaches from the previous works in the literature. The main contribution is to employ the notion of \textit{symbolic models} (see, e.g., \cite{tabuadabook2009}). Roughly speaking, the symbolic model represents an abstracted expression of the control system, where each state of the symbolic model corresponds to an aggregate of states of the control system. 
 {The utilization of symbolic models is motivated by the fact that the self-triggered controller for reachability and safety specifications can be synthesized by employing algorithmic techniques from supervisory control, such as reachability/safety games, which, in particular, allow to deal with {non-convex} safety sets. More importantly, the only assumption required for the controller synthesis is Lipschitz continuity, and it does not require any stability assumption such as the one considered in \cite{hashimoto2019b}. 
%As we will see later, this is achieved by employing the abstraction techniques from the notion of approximate alternating simulation relations \cite{zamani2012a}.
Hence, the proposed approach is 
advantageous over the afore-mentioned previous works, in the sense that it deals with the non-convexity of safety sets and can be applied to a wide class of (Lipschitz) control systems.}

Our approach is also related to several abstraction schemes, see, e.g., \cite{adnane2018c,arman2017a,girard2009a,adnane2018b,zamani2012a,borri18a,mazo2011,zamani2018a}; in particular, it may be closely related to \cite{adnane2018c,arman2017a}, in which some methods of constructing symbolic models with event-triggered strategies have been provided. Note that our approach differs from these previous results in the following sense. In the previous results, for example in \cite{adnane2018c}, the authors provided a way to construct symbolic models with a \textit{given} event-triggered strategy. On the other hand, our approach aims at \textit{synthesizing} a self-triggered controller through the construction of symbolic models, such that the reachability and safety specifications are fulfilled. %As we will see later, this is achieved by introducing a new transition system that incorporates a self-triggered strategy, and then implementing a reachability game to synthesize the self-triggered controller.
In addition to \cite{adnane2018c,arman2017a}, several approaches to obtain symbolic models for NCSs have been also proposed, see e.g., \cite{borri18a,zamani2018a}; however, none of these works provided a way to synthesize event/self-triggered controllers, which will be the main objective considered in this paper. 

\smallskip
\noindent 
\textit{Notation.} 
Let $\mathbb{N}$, $\mathbb{N}_{\geq 0}$, $\mathbb{N}_{>0}$, $\mathbb{N}_{a:b}$ be the set of integers, non-negative integers, positive integers, and the set of integers in the interval $[a, b]$, respectively. 
Let $\mathbb{R}$, $\mathbb{R}_{\geq 0}$, $\mathbb{R}_{>0}$ be the set of reals, non-negative reals and positive reals, respectively. We denote by $\| \cdot \|$ the Euclidean norm. 
For given $x \in \mathbb{R}^n$ and $\varepsilon \in \mathbb{R}_{\geq 0}$, let ${\cal B}_{\varepsilon} (x) \subset \mathbb{R}^n$ be the ball set given by ${\cal B}_{\varepsilon} (x) = \{ x' \in\mathbb{R}^n\ |\ \| x' - x \| \leq \varepsilon \}$. For given ${X} \subseteq \mathbb{R}^n$ and $\eta >0$, denote by $[{X}]_\eta \subset \mathbb{R}^n$ the lattice in $X$ with the quantization parameter $\eta$, i.e.,
%\begin{equation*}
$[{X}]_\eta =  \{ x\in{X} \ |\ x_i = \frac{2 \eta }{\sqrt{n}} 
a_i,\ a_i\in \mathbb{N},\ i = 1, 2, \ldots, n \}$,   
%\end{equation*}
where $x_i \in \mathbb{R}$ is the $i$-th element of $x$. Given $X\subset \mathbb{R}^n$, let 
$\mathsf{Int}_\varepsilon (X) = \left \{ x \in X\ |\ {\cal B}_{\varepsilon} (x) \subseteq X \right \}$, i.e., $\mathsf{Int}_\varepsilon (X)$ is the set of all states in $X$, such that these are $\varepsilon$-away from the boundary of $X$. Given $X \subseteq \mathbb{R}^n$, denote by $X^*$ the set of all finite sequences of elements in $X$.
%Given $X \subseteq \mathbb{R}^n$ and $M \in \mathbb{N}_{>0}$, denote by $X^{1:M}$ the set of all finite sequence of at most $M$ elements in $X$. 
%\end{equation}
%i.e., $\mathsf{Int}_\varepsilon (X)$ is the set of all states in $X$, such that they are $\varepsilon$-away from the boundary of $X$. 
Given $x \in \mathbb{R}^n$, $X\subseteq \mathbb{R}^n$, denote by $\mathsf{Nearest}_X (x)$ the closest points in $X$ to $x$, i.e., $\mathsf{Nearest}_X (x) = {\arg\min}_{x' \in X} \| x - x' \|$. Given a set $X$, denote by $2^X$ the power set of $X$ that represents the collection of all subsets of $X$.
%We denote by $f : X \twoheadrightarrow Y$ a set-valued mapping $f$ from $X$ to $Y$. 
\section{Problem formulation}
\subsection{System description}
Consider a networked control system shown in \rfig{NCS}, where the plant and the controller are connected over a communication network. 
%We assume that the communication network is ideal, which means that any network delays and packet dropouts do not arise; however, as we will see below, we aim at saving the communication resources for NCSs by reducing the number of transmissions through the communication network. 
We assume that the dynamics of the plant is given by the following nonlinear discrete-time systems: 
\begin{equation}\label{dynamics2}
x_{k+1} = f (x_k, u_k), \ \ x_0 \in X_0,\ u_k \in U
\end{equation}
for all $k\in\mathbb{N}_{\geq 0}$, where $x_k \in \mathbb{R}^{n_x}$ is the state, $u_k\in \mathbb{R}^{n_u}$ is the control input, $X_0 \subset \mathbb{R}^{n_x}$ is the set of initial states, $U \subset \mathbb{R}^{n_u}$ is the set of control inputs, and $f: \mathbb{R}^{n_x} \times \mathbb{R}^{n_u} \rightarrow \mathbb{R}^{n_x}$ is the function that represents the underlying model of the plant. Throughout the paper, we assume that $X_0$ and $U$ are both compact sets. Moreover, we assume that the function $f$ satisfies the following Lipschitz continuity: 
\begin{myas}\label{lipschitz}
The function $f : \mathbb{R}^{n_x} \times \mathbb{R}^{n_u} \rightarrow \mathbb{R}^{n_x}$ is Lipschitz continuous in $x\in\mathbb{R}^{n_x}$, i.e., there exists $L_x \in \mathbb{R}_{\geq 0}$, such that $\| f(x_1, u) - f(x_2, u) \| \leq L_x \| x_1 - x_2\|$ for all $x_1, x_2 \in \mathbb{R}^{n_x}, u\in U$. \qedwhite 
\end{myas}

We say that the sequence $x_0, x_1, x_2, \ldots \in \mathbb{R}^{n_x}$ is a \textit{trajectory} of the system \req{dynamics2}, if $x_0 \in X_0$ and there exist $u_0, u_1, u_2, \ldots \in U$ such that $x_{k+1} = f(x_k, u_k)$, $\forall k\in\mathbb{N}_{\geq 0}$. For simplicity of presentation, we denote by $\phi (x, u, m) \in \mathbb{R}^{n_x}$ the state that is reached from $x \in \mathbb{R}^{n_x}$ with $u \in U$ applied \textit{constantly} for $m$ time steps, i.e., $x^+ = \phi (x, u, m)$ iff there exist $x_{0}, \ldots, x_m \in \mathbb{R}^{n_x}$, such that $x_0 = x$, $x_{k+1} = f(x_k, u)$, $\forall k\in\mathbb{N}_{0:m-1}$, and $x_m = x^+$. The following result is an immediate consequence from \ras{lipschitz}, which will be utilized later in this paper:
\begin{mylem}\label{liplem}
For every $x_{1}, x_{2} \in \mathbb{R}^{n_x}$, $u\in U$, and $m\in\mathbb{N}_{>0}$, $\|\phi (x_1, u, m) -\phi (x_2, u, m) \| \leq L^m _x \| x_1 - x_2 \|$. 
\qedwhite 
\end{mylem}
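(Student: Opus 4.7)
The plan is to prove the lemma by straightforward induction on the horizon $m$, using Assumption~\ref{lipschitz} as the one-step contraction estimate and the semigroup/compositional structure of $\phi$ that follows from its definition.

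First I would record the elementary fact that $\phi(x, u, m+1) = f(\phi(x, u, m), u)$ for every $m \in \mathbb{N}_{\geq 0}$ (with the convention $\phi(x, u, 0) = x$). This is immediate from the definition of $\phi$ given in the excerpt, namely that $\phi(x, u, m)$ is obtained by iterating $f(\cdot, u)$ exactly $m$ times starting from $x$.

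Next I would carry out the induction. For the base case $m = 1$, the statement reduces to $\|f(x_1, u) - f(x_2, u)\| \leq L_x \|x_1 - x_2\|$, which is exactly the Lipschitz property of $f$ in its first argument asserted by Assumption~\ref{lipschitz}. For the inductive step, assume the bound holds at step $m$; then
\begin{align*}
\|\phi(x_1, u, m+1) - \phi(x_2, u, m+1)\|
&= \|f(\phi(x_1, u, m), u) - f(\phi(x_2, u, m), u)\| \\
&\leq L_x \|\phi(x_1, u, m) - \phi(x_2, u, m)\| \\
&\leq L_x \cdot L_x^m \|x_1 - x_2\|
= L_x^{m+1} \|x_1 - x_2\|,
\end{align*}
where the first inequality uses Assumption~\ref{lipschitz} applied to the pair $(\phi(x_1,u,m), \phi(x_2,u,m))$ with the common input $u \in U$, and the second uses the induction hypothesis.

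There is essentially no obstacle here: the result is a routine consequence of composing the Lipschitz constant along the $m$-step trajectory, enabled by the fact that the same constant input $u$ is used at every step (so Assumption~\ref{lipschitz} applies with the same $L_x$ at each stage). The only subtlety worth highlighting is that the lemma would fail if the input were allowed to change, since then one would need joint Lipschitz continuity in $(x,u)$; here this is avoided because $\phi(\cdot, u, m)$ is defined with $u$ held constant over the $m$ steps.
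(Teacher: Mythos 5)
Your induction is correct and is exactly the routine argument the paper has in mind: the paper states this lemma without proof, calling it an immediate consequence of Assumption~\ref{lipschitz}, and your composition of the one-step Lipschitz bound along the $m$ constant-input steps is the standard way to make that precise. No gaps.
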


\subsection{Self-triggered strategy}\label{self_triggered_sec}
%In NCSs, the plant and the controller exchange their information 
Let us now provide the overview of the control strategies. First, let $k_\ell$, $\ell \in \mathbb{N}_{\geq 0}$ with $k_0 = 0$, $k_{\ell+1} > k_\ell$, $\forall \ell \in \mathbb{N}_{\geq 0}$ be the communication time steps when the information is exchanged between the plant and the controller. %Specifically, for each $k_\ell$ the plant transmits the current state information $x_{k_\ell}$ to the controller, and the controller computes a suitable control input to be applied and transmit it back to the plant. 
In this paper, we employ a \textit{self-triggered strategy} \cite{heemels2012a}, which means that the controller is defined as a mapping from the state to the corresponding pairs of the control input and the inter-communication time step: 
\begin{align}\label{controller}
C : \mathbb{R}^{n_x} \rightarrow 2^{U \times \mathbb{N}_{> 0}}. 
\end{align}
That is, for each $k_\ell$, $\ell\in\mathbb{N}_{\geq 0}$ the plant transmits the current state information $x_{k_\ell}$ to the controller, and the controller determines both the control input and the inter-communication time step as $\{u_{k_\ell}, m_\ell \} \in C (x_{k_\ell})$. Then, the controller transmits $\{ u_{k_\ell}, m_\ell\}$ to the plant, and the plant applies $u_{k_\ell}$ until $k_{\ell+1} = k_\ell + m_\ell$, i.e., $u_k = u_{k_\ell},\ \forall k \in \mathbb{N}_{k_\ell: k_{\ell+1} -1}$. Then, the next communication is given at $k_{\ell+1}$ and the same procedure as above is iterated. 
 {Given $C$, we say that the sequence $x_0, x_1, x_2, \ldots \in \mathbb{R}^{n_x}$ is a \textit{controlled} trajectory of the system \req{dynamics2} induced by $C$}, if $x_0 \in X_0$, $x_{k+1} = f(x_k, u_{k_\ell})$, $\forall k \in \mathbb{N}_{k_\ell: k_{\ell+1}-1}$, $\forall \ell \in \mathbb{N}_{\geq 0}$, where $k_0 = 0$, $k_{\ell+1} = k_\ell + m_\ell$, $\forall \ell \in \mathbb{N}_{\geq 0}$ and $\{ u_{k_\ell}, m_\ell \} \in C (x_{k_\ell})$, $\forall \ell \in \mathbb{N}_{\geq 0}$. 

\subsection{Problem statement}
Let $X_S \subset \mathbb{R}^{n_x}$ with $X_0 \subseteq X_S$ be the \textit{safety set}, in which the trajectory must remain for all times. In addition, let $X_F \subset \mathbb{R}^{n_x}$ with $X_F \subseteq X_S$ be the \textit{target set}, which the trajectory aims to reach in finite time. 
%, X_F \subset \mathbb{R}^{n_x}$ with $X_F \subseteq X_S$ be the {safety set} and the {target set}, respectively. That is, the state trajectory aims at remaining in $X_S$ for all times, while at the same time achieving the reachability to $X_F$ in finite time. 
We assume that $X_S, X_F$ are both compact and can be non-convex sets. 
%and that we have $X_0 \subseteq X_S$. 
%reaching $X_F$ within $N_{\max}$ number of communication time steps, while at the same time remaining in $X_S$ for all times. 
Specifically, we illustrate the reachability and safety specifications by introducing the notion of \textit{validity} of the controller $C$, which is defined below: %in order to illustrate the reachability and safety specifications: 

\begin{mydef}\label{validity}
%Let $X_S, X_F$ be the safety and the target set, respectively, and let $N_{\max} \in \mathbb{N}_{>0}$ be the maximum allowable number of communication time steps. 
%and let $x_0 \in X_S$ be the initial state of the system in \req{dynamics2}. 
Given $(X_S, X_F)$, we say that the controller $C$ is \textit{valid} for the system \req{dynamics2} with the specification $(X_S, X_F)$, if for every $x_0 \in X_0$ and the resulting controlled trajectory $x_0, x_1, x_2, \ldots, $ of \req{dynamics2} induced by $C$, the following statement holds: there exist $N, \ k_N \in \mathbb{N}_{\geq 0}$, such that 
\begin{enumerate}
\renewcommand{\labelenumi}{(C\arabic{enumi})}
\setlength{\leftskip}{0.1cm}
\item $x_{k_N} \in X_F$; 
\item $x_k \in X_S,\ \forall k\in\mathbb{N}_{0: k_N}$.  \qedwhite
\end{enumerate}
\end{mydef}

\begin{figure}[t]
  \begin{center}
   \includegraphics[width=6.0cm]{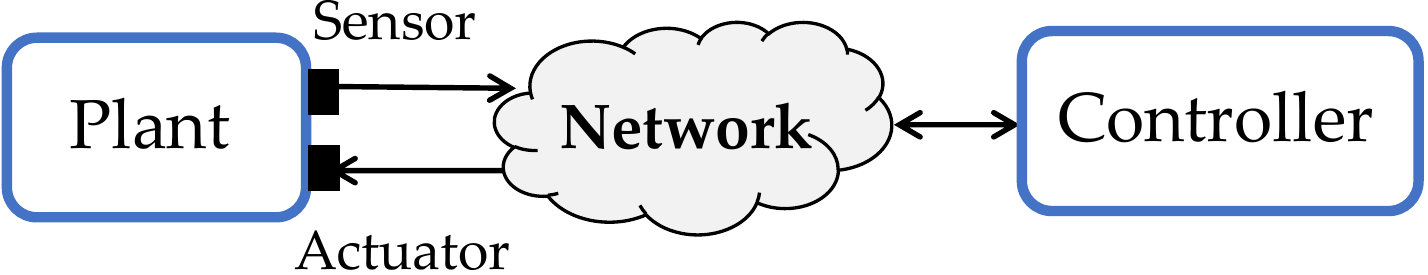}
   \caption{Networked Control System. } 
   \label{NCS}
   \vspace{-0.5cm}
  \end{center}
\end{figure}
 
That is, the controller $C$ is valid if every initial state can be steered to the target set in finite time (C1), while at the same time remaining in the safety set for all times (C2). 
%, the resulting controlled trajectory remains in the safety set for all times (C1), while at the same time it reaches the target set in finite time (C2). 
We now state the main problem to be solved in this paper: { 
\begin{mypro}\label{problem}
Given $(X_S, X_F)$, synthesize a valid controller for the system \req{dynamics2} with the specification $(X_S, X_F)$. \qedwhite
\end{mypro}}

\section{Constructing symbolic models} 
As the first step to solve \rpro{problem}, this section presents a framework to obtain a symbolic model, which represents an abstracted expression of the control system. 

\subsection{Induced transition systems}\label{induced_sec}
% {Let us first show that the system \req{dynamics2} can be captured by a general class of \textit{transition systems}:} 
Let us first define the notion of \textit{transition systems}, which captures the control system described in \req{dynamics2}: 
\begin{mydef}\label{plantsystem}
A \textit{transition system} induced by the system \req{dynamics2} is a tuple $\Sigma = (X, X_0, U, G, O)$, where: 
\begin{itemize}
\item $X = \mathbb{R}^{n_x}
$ is a set of states; 
\item $X_0 \subset X$ is a set of initial states; 
\item $U \subset \mathbb{R}^{n_u}$ is a set of inputs;
\item $G : X \times U \rightarrow 2^X$ is a transition map, where $x^+ \in G (x, u)$ iff $x^+ = f (x, u)$; 
\item $O : X \times U \rightarrow 2^{X^*}$ is an output map, where $\{ x^+ \} \in O (x, u)$ iff $ x^+ \in G (x, u)$.  \qedwhite
\end{itemize} 
\end{mydef}

In \rdef{plantsystem}, the transition $x^+ \in G (x, u)$ means that the system evolves from $x$ to $x^+$ by applying the control input $u$ according to \req{dynamics2}.  
 %As we will see later, this general formulation will be useful for employing the standard notion for constructing symbolic models.}
 %the transition system in \rdef{plantsystem} is given in a general form, using the set-valued transition map $G$, even though the system \req{dynamics2} is deterministic. This is because, as we will see later, we will employ the standard notion to construct symbolic models.} 
The output map $O$ is defined to produce, for each transition $ x^+ \in G (x, u)$, the corresponding output, which is simply here given by $\{ x^+ \}$.  {Here, even though the system \req{dynamics2} is deterministic, the transition and the output maps are given by \textit{set-valued} maps, in order to show that \req{dynamics2} can be captured within the general definition of transition systems.}
%As will be seen below, the output map is used to produce the set of sequences of states while achieving the transition, which will be utilized for the controller synthesis provided later in this paper. 

%Having defined the (standard) transition systems in \rdef{plantsystem}, 
Next, we extend \rdef{plantsystem} by incorporating the self-triggered strategy. 
%next incorporate the behaviors of the system in \req{dynamics2} for the case when the self-triggered strategy is implemented. 
As stated in \rsec{self_triggered_sec}, in the self-triggered strategy both the control input and the inter-communication time step are the decision variables that are determined by the controller, and the control input is applied constantly until the next communication time. In this paper, we express this fact by introducing the \textit{augmented transition system} of $\Sigma$, which is formally defined below: 
%of one of the transition systems, which we call \textit{Self-Triggered System} (STS) defined below: 
%the control input is applied constantly for the time steps that is determined by the controller. Let us first interpret this fact as the behavior of the \textit{Self-Triggered System} (STS), which is formally described below: 

\begin{mydef}\label{switchedsystems}
\normalfont
Let $\Sigma = (X, X_0, U, G, O)$ be a transition system defined in \rdef{plantsystem}. 
The \textit{augmented transition system} of $\Sigma$ is a tuple $\Sigma_{A} = (X, X_0, U, M, G_{A}, O_A)$, where: 
\begin{itemize}
\item $X$ is a set of states; 
\item $X_0$ is a set of initial states; 
\item $U$ is a set of control inputs; 
\item ${M} = \mathbb{N}_{>0}$ is a set of inter-communication time steps; 
\item ${G}_{A} :  X \times U \times M \rightarrow 2^X$ is a transition map, where $x^+ \in {G}_{A} (x, u, m)$ iff $x^+ = \phi (x, u, m)$; 
\item $O_A : X \times U \times M \rightarrow 2^{X^*}$ is an output map, where $\{x_1, \ldots, x_{m} \} \in O_A (x, u, m)$, iff $x_p \in {G}_{A} (x, u, p)$, $\forall p \in \mathbb{N}_{1:m}$. 
\qedwhite 
\end{itemize} 
\end{mydef}

In essence, the augmented transition system $\Sigma_{A}$ introduces the new transition map ${G}_{A}$ and the output map ${O}_{A}$, which incorporate both the control input and the inter-communication time step as the decision variables. 
Specifically, $x^+ \in {G}_{A} (x, u, m)$ means that the system evolves from $x$ to $x^+$ by applying $u\in U$ constantly for $m$ time steps according to \req{dynamics2}. In addition, $\{x_1, \ldots, x_{m} \} \in O_A (x, u, m)$ means that, while applying $u$ constantly for $m$ time steps from $x$, it produces the output $\{x_1, \ldots, x_{m} \}$ as the corresponding sequence of states generated according to the transition map ${G}_{A}$. 
%achieving the transition from $x_0$ to $x_m$ by applying $u$ constantly for $m$ steps, it produces the output as the corresponding intermediate states $\{x_1, \ldots, x_{m} \}$. 
%while applying $u$ from $x_0$ constantly for $m$ steps, it produces the output as the corresponding sequence of states according to the transition map ${G}_{A}$.
%applying $u$ from $x_0$ constantly for $m$ steps, it produces the output as the corresponding intermediate states. 
%achieving the transition from $x_0$ to $x_m$ by applying $u\in U$ constantly for $m$ steps, it produces the output as the corresponding intermediate states. 

We say that ${x}_{0}, {x}_{1}, \ldots \in {X}$ is a \textit{trajectory} of ${\Sigma}_{A}$, if $x_0 \in X_0$ and there exist ${m}_0, {m}_1, \ldots \in {M}$ and ${u}_{k_0}, {u}_{k_1}, \ldots \in {U}$, such that $\{x_{k_\ell+1}, \ldots, x_{k_{\ell+1}}\} \in O_A (x_{k_\ell}, u_{k_\ell}, m_\ell)$, 
%the following holds: ${x}_{k_{\ell+1}} \in G_A ({x}_{k_{\ell}}, {u}_{k_\ell}, {m}_\ell)$, ${\bf y}_\ell \in O_A (x_{k_\ell}, u_{k_\ell}, m_\ell, x_{k_{\ell+1}})$, 
$\forall \ell \in \mathbb{N}_{\geq 0}$, where $k_0 = 0$, $k_{\ell+1} = k_\ell + {m}_\ell$, $\forall \ell \in \mathbb{N}_{\geq 0}$.  {Given $C$, we say that a sequence ${x}_{0}, {x}_{1}, \ldots \in {X}$ is a \textit{controlled} trajectory of ${\Sigma}_{A}$ induced by $C$}, if it is a trajectory of ${\Sigma}_{A}$ with $\{{u}_{k_\ell}, {m}_\ell \} \in C ( {x}_{k_\ell})$, $\forall \ell \in \mathbb{N}_{\geq 0}$. Note that the system \req{dynamics2} and ${\Sigma}_{A}$ produce the same controlled trajectories, i.e., if $x_0, x_1, \ldots \in X$ is a controlled trajectory of the system \req{dynamics2} induced by $C$, so is of ${\Sigma}_A$ induced by $C$, and vice versa. %Hence, \rpro{problem} is equivalent to synthesizing a valid controller for ${\Sigma}_{A}$ with the specification $(X_S, X_F)$.  

\subsection{Constructing symbolic models}
Having defined the augmented transition system, 
we now construct a {symbolic model} of $\Sigma _A$.  The symbolic model will be denoted as $\widetilde{\Sigma}^{\varepsilon} _{A} = (\widetilde{X}, \widetilde{X}_0, \widetilde{U}, \widetilde{M}, \widetilde{G}_{A}, \widetilde{O}_{A})$, where $\widetilde{X}$ is a set of states, $\widetilde{X}_0$ is a set of initial states, $\widetilde{U}$ is a set of inputs, $\widetilde{M}$ is a set of inter-communication time steps, $\widetilde{G}_{A}$ is a transition map, and $\widetilde{O}_{A}$ is an output map. 
A more formal definition of $\widetilde{\Sigma}^\varepsilon _{A}$ will be given later in this section. To derive the symbolic model, we make use of the notion of %\textit{approximate alternating simulation relation}\cite{zamani2012a}: 
 {\textit{strong approximate alternating simulation relation}}\cite{borri18a}: 

\begin{mydef}[{Strong $\varepsilon$-ASR}]\label{asr2}
\normalfont
Let $\Sigma_{A} = (X, X_{0}, {U}, $ $M, {G}_{A}, O_A)$ and $\widetilde{\Sigma}^\varepsilon _{A} = (\widetilde{X}, \widetilde{X}_0, \widetilde{U}, \widetilde{M}, \widetilde{G}_{A}, \widetilde{O}_{A})$ be two transition systems. A relation $R (\varepsilon) \subseteq \widetilde{X} \times {X}$ with $\varepsilon \in \mathbb{R}_{\geq 0}$ is called a  {\textit{strong $\varepsilon$-approximate Alternating Simulation Relation} (strong $\varepsilon$-ASR for short)}
% {\textit{strong $\varepsilon$-approximate Alternating Simulation Relation} (strong $\varepsilon$-ASR for short)} 
from $\widetilde{\Sigma}^\varepsilon _{A}$ to ${\Sigma}_{A}$, if the following conditions hold: 

\begin{enumerate}
\renewcommand{\labelenumi}{(D\arabic{enumi})}
\setlength{\leftskip}{0.1cm}
\item For every $\tilde{x}_0 \in \widetilde{X}_0$, there exists ${x}_0 \in {X}_0$ such that $(\tilde{x}_0, {x}_0) \in R (\varepsilon)$; 
\item For every $(\tilde{x}, x) \in R (\varepsilon)$, we have $\| \tilde{x} - x \| \leq \varepsilon$; 
\item For every $(\tilde{x}, x) \in R (\varepsilon)$ and for every $\tilde{u} \in \widetilde{U}$, $\widetilde{m} \in \widetilde{M}$,  {there exist $u = \widetilde{u} \in U$, $m = \widetilde{m} \in M$}, such that the following holds:
% {there exist $u = \widetilde{u} \in U$, $m = \widetilde{m} \in M$}, such that the following holds: 
$\{x_1, \ldots, x_{m}\} \in O_A(x, u, m)$ implies the existence of $\{\tilde{x}_1, \ldots, \tilde{x}_{\widetilde{m}}\} \in \widetilde{O}_A(\tilde{x}, \tilde{u}, \widetilde{m})$, such that $(\tilde{x}_p, x_p) \in R (\varepsilon)$, $\forall p \in \mathbb{N}_{1:m} (= \mathbb{N}_{1:\widetilde{m}})$. \qedwhite 
\end{enumerate}
\end{mydef}

The notion of \textit{strong} $\varepsilon$-ASR is {strong} with respect to the standard notion of $\varepsilon$-ASR \cite{zamani2012a}, in the sense that it must ensure the existence of the same control input $u = \widetilde{u}$ (and $m = \widetilde{m}$) for the similarity condition of (D3). The concept of (strong) $\varepsilon$-ASR is useful for the controller synthesis in the following sense. Suppose that $\widetilde{\Sigma}^\varepsilon _{A}$ is constructed to guarantee the existence of a (strong) $\varepsilon$-ASR from $\widetilde{\Sigma}^\varepsilon _{A}$ to ${\Sigma}_{A}$. 
%there exists an $\varepsilon$-ASR from from $\widetilde{\Sigma}^\varepsilon _{A}$ to ${\Sigma}_{A}$. 
%Once the symbolic model is obtained, 
%Then, we can construct a controller for $\widetilde{\Sigma}^\varepsilon _{A}$ by employing useful techniques from surpervisory control, such as fixed point algorithms \cite{tabuadabook2009}. 
Then, it is shown that, the existence of a valid controller for $\widetilde{\Sigma}^\varepsilon _{A}$ implies the existence of a valid controller for ${\Sigma} _{A}$. In other words, a controller for the symbolic model that satisfies reachability and safety specifications, can be \textit{refined} to a controller for $\Sigma_A$ and the system \req{dynamics2} that guarantees the same specifications (for a detailed discussion, see \rsec{selftrig_sec}). 

In the following, we provide an approach to construct $\widetilde{\Sigma}^\varepsilon _{A}$ as the abstraction of $\Sigma_{A}$. Given $\varepsilon \in \mathbb{R}_{\geq 0}$, the symbolic model is constructed to guarantee the existence of a strong $\varepsilon$-ASR %$R(\varepsilon)$, 
from $\widetilde{\Sigma}^\varepsilon _{A}$ to $\Sigma_{A}$. 
%The symbolic model is constructed to guarantee the existence of $\varepsilon \in \mathbb{R}_{\geq 0}$ and the relation $R(\varepsilon)$, such that $R(\varepsilon)$ is $\varepsilon$-ASR from $\widetilde{\Sigma}^\varepsilon _{A}$ to $\Sigma_{A}$. 
First, let $\widetilde{X}_0\subseteq X_0$, $\widetilde{X} \subseteq X$, $\widetilde{U} \subseteq U$ be given by $\widetilde{X}_0 = [X_0]_{\eta_x}, \widetilde{X} = [X]_{\eta_x}, \widetilde{U} = [U]_{\eta_u}$ for given $\eta_x, \eta_u \in \mathbb{R}_{>0}$. That is, we quantize the sets $X_0$, $X$, $U$ with the quantization parameters $\eta_x, \eta_u$. 
%Note that since $X_S$ and $U$ are both bounded, $\widetilde{X}_S$ and $\widetilde{U}$ are both finite. 
Moreover, let $\widetilde{M} \subset M$ be given by $\widetilde{M} = \mathbb{N}_{1:M_{\max}}$ for a given $M_{\max} \in \mathbb{N}_{>0}$. That is, we restrict that the inter-communication time step cannot exceed $M_{\max}$. The above quantization as well as the restriction on the inter-communication time steps will ensure that the controller synthesis algorithm can be terminated with a finite number of iterations. 
Based on the above definitions, the symbolic model $\widetilde{\Sigma}^\varepsilon _{A}$ is formally defined as follows: 

\begin{mydef}\label{symbolic_model}
\normalfont
Let $\Sigma_{A} = (X, X_0, U, M, G_{A}, O_A)$ be the augmented transition system of $\Sigma$ and let $\eta_x, \eta_u \in \mathbb{R}_{>0}$ and $M_{\max} \in \mathbb{N}_{>0}$ be the quantization parameters and the maximum inter-communication time step, respectively. For a given precision $\varepsilon \geq \eta_x$, a \textit{symbolic model} of $\Sigma_{A}$ is a tuple $\widetilde{\Sigma}^\varepsilon _{A} = (\widetilde{X}, \widetilde{X}_0, \widetilde{U}, \widetilde{M}, \widetilde{G}_{A}, \widetilde{O}_A)$, where 
\begin{itemize}
\item $\widetilde{X}$ is a set of states; 
\item $\widetilde{X}_0$ is a set of initial states;
\item $\widetilde{U}$ is a set of inputs; 
\item $\widetilde{M}$ is a set of inter-communication time steps; 
\item $\widetilde{G}_{A} : \widetilde{X} \times \widetilde{U} \times \widetilde{M} \rightarrow  2^{\widetilde{X}}$ is a transition map, where $\tilde{x}^+ \in \widetilde{G}_{A} (\tilde{x}, \tilde{u}, \widetilde{m})$ iff $\tilde{x}^+  \in 
{\cal B}_{\varepsilon_{\widetilde{m}}} ({\phi} (\tilde{x}, \tilde{u}, \widetilde{m}))$ with $\varepsilon_{\widetilde{m}} = L_x^{\widetilde{m}}  \varepsilon + \eta_x$; 
\item $\widetilde{O}_A : \widetilde{X} \times \widetilde{U} \times \widetilde{M} \rightarrow 2^{\widetilde{X}^*}$ is an output map, where $\{\tilde{x}_1, \ldots, \tilde{x}_{\widetilde{m}} \} \in \widetilde{O}_A (\tilde{x}, \tilde{u}, \widetilde{m})$ iff $\tilde{x}_{p} \in \widetilde{G}_{A} (\tilde{x}, \tilde{u}, p)$, $\forall p\in\mathbb{N}_{1:\widetilde{m}}$. \qedwhite 
%there exist $\hat{x}_0$, $\ldots$, $\hat{x}_m$ such that $\hat{x}_0 = \tilde{x}$, $\hat{x}_{p+1} = f (x_p, \tilde{u}, 0)$, $\forall p\in\mathbb{N}_{0:m-1}$ and 
\end{itemize} 
\end{mydef}

%Recall that $L_x$ in \rdef{symbolic_model} is the Lipschitz constant parameter defined in \ras{lipschitz}. 
In contrast to \rdef{switchedsystems}, the symbolic model deals with the states in $\widetilde{X}$ and control inputs in $\widetilde{U}$, and introduces the new transition and the output maps $\widetilde{G}_{A}$, $\widetilde{O}_{A}$. 
%such that $\widetilde{\Sigma}^\varepsilon _{A}$ behaves approximately similar to ${\Sigma} _{A}$ (in the sense of some relation $R(\varepsilon)$). 
%Specifically, the definition of $\widetilde{G}_{A}$ means that the next state to be transitioned should be inside ${\cal B}_{\varepsilon_{\widetilde{m}}}({\phi} (\tilde{x}, \tilde{u}, \widetilde{m}))$. 
%we have $\tilde{x}^+ \in \widetilde{G}_{A} (\tilde{x}, \tilde{u}, \widetilde{m})$ iff $\tilde{x}^+ \in {\cal B}_{\varepsilon_{\widetilde{m}}}({\phi} (\tilde{x}, \tilde{u}, \widetilde{m}))$. %$\tilde{x}^+  \in {\cal B}_{\varepsilon_{\widetilde{m}}} ({\phi} (\tilde{x}, \tilde{u}, \widetilde{m}))$, where $\varepsilon_{\widetilde{m}} = L^{\widetilde{m}} _x \varepsilon + \eta_x$. %the conditions (E1) and (E2) in \rdef{symbolic_model} are both satisfied. 
%the next state that can be transitioned from $\tilde{x} \in \widetilde{X}$ with $\tilde{u} \in \widetilde{U}$, $\tilde{m} \in \widetilde{M}$ is given only when the conditions 1) and 2) are satisfied. 
%(E1) means that the next state to be transitioned should be inside ${\cal B}_{\varepsilon_{\widetilde{m}}}({\phi} (\tilde{x}, \tilde{u}, \widetilde{m}))$. 
As shown in \rdef{symbolic_model}, we have $\tilde{x}^+ \in \widetilde{G}_{A} (\tilde{x}, \tilde{u}, \widetilde{m})$ iff $\tilde{x}^+  \in 
{\cal B}_{\varepsilon_{\widetilde{m}}} ({\phi} (\tilde{x}, \tilde{u}, \widetilde{m}))$. %The illustration of this transition map is shown in \rfig{symbolicmodel}. 
Intuitively, the set ${\cal B}_{\varepsilon_{\widetilde{m}}}({\phi} (\tilde{x}, \tilde{u}, \widetilde{m}))$ represents the over-approximation of the reachable states from ${\cal B}_{\varepsilon}(\tilde{x})$ plus the quantization error $\eta_x$, by applying $\tilde{u}$ constantly for $\widetilde{m}$ time steps. The output map $\widetilde{O}_{A}$ is defined to produce the set of sequence of states according to the transition map $\widetilde{G}_{A}$. 
The existence of a strong $\varepsilon$-ASR from $\widetilde{\Sigma}^\varepsilon _{A}$ to $\Sigma_{A}$, can be justified by the following result: 
\begin{mylem}\label{asrresult}
Let $\Sigma_{A} = (X, X_0, U, M, G_{A}, O_A)$ be the augumented transition system of $\Sigma$ and let $\eta_x, \eta_u \in \mathbb{R}_{>0}$ and $M_{\max} \in \mathbb{N}_{>0}$ be the quantization parameters and the maximum inter-communication time step, respectively. Also, let $\widetilde{\Sigma}^\varepsilon _{A} = (\widetilde{X}, \widetilde{X}_0, \widetilde{U}, \widetilde{M}, \widetilde{G}_{A}, \widetilde{O}_A)$ be the symbolic model of $\Sigma_{A}$ defined in \rdef{symbolic_model}, where $\varepsilon$ is the precision parameter with $\varepsilon \geq \eta_x$. Then, the relation 
%\vspace{-0.2cm}
\begin{align}\label{relation}
R ( \varepsilon ) = \{ (\tilde{x}, x) \in \widetilde{X} \times X \ |\ \| \tilde{x} - x\| \leq \varepsilon \} 
\end{align}
is a strong $\varepsilon$-ASR from $\widetilde{\Sigma}^\varepsilon _{A}$ to ${\Sigma} _{A}$. \qedwhite 
%there exist a relation $R(\varepsilon) \subseteq \widetilde{X} \times X$, such that $R(\varepsilon)$ is $\varepsilon$-ASR from $\widetilde{\Sigma}^\varepsilon _{A}$ to $\Sigma_{A}$. \qedwhite 
%Let $\Sigma_{A} = (X, X_0, M, U, G_{A})$ and $\widetilde{\Sigma}^{\varepsilon} _{A} = (\widetilde{X}_S, \widetilde{X}_0, \widetilde{U}, \widetilde{M}, \widetilde{G}_{A})$ be the augmented system and the corresponding symbolic augmented system, respectively, where $\varepsilon$ is the precision parameter with $\varepsilon \geq \eta_x$. 
%Then, there exist $\varepsilon >0$ and $R(\varepsilon) \subseteq X \times \widetilde{X}_S \times V \times \widetilde{V}$, such that $R(\varepsilon)$ is $\varepsilon$-ASR from $\widetilde{\Sigma}^{\varepsilon} _{ST}$ to $\Sigma_{ST}$. \qedwhite 
\end{mylem}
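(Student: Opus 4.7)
The plan is to verify the three conditions (D1)--(D3) of Definition 2 for the relation $R(\varepsilon)$ defined in \req{relation}.

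For (D1), I would observe that $\widetilde{X}_0 = [X_0]_{\eta_x} \subseteq X_0$ by construction, so given any $\tilde{x}_0 \in \widetilde{X}_0$, it suffices to pick $x_0 = \tilde{x}_0 \in X_0$; then $\|\tilde{x}_0 - x_0\| = 0 \leq \varepsilon$, and hence $(\tilde{x}_0, x_0) \in R(\varepsilon)$. Condition (D2) is immediate from the very definition of $R(\varepsilon)$, since membership is defined by the inequality $\|\tilde{x} - x\| \leq \varepsilon$.

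The substantive step is (D3). Fix $(\tilde{x}, x) \in R(\varepsilon)$, $\tilde{u} \in \widetilde{U}$, $\widetilde{m} \in \widetilde{M}$, and choose $u = \tilde{u}$ and $m = \widetilde{m}$, as required by the strong variant of the simulation relation. The unique output sequence of $\Sigma_A$ is $\{x_1,\ldots,x_m\} \in O_A(x,u,m)$ with $x_p = \phi(x, u, p)$. For each $p \in \mathbb{N}_{1:m}$, I would define $\tilde{x}_p \in \mathsf{Nearest}_{\widetilde{X}}(x_p)$, i.e., the closest lattice point in $\widetilde{X} = [X]_{\eta_x}$ to $x_p$. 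By construction of the quantization, $\|\tilde{x}_p - x_p\| \leq \eta_x \leq \varepsilon$, which immediately yields $(\tilde{x}_p, x_p) \in R(\varepsilon)$.

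It remains to check that $\{\tilde{x}_1,\ldots,\tilde{x}_{\widetilde{m}}\} \in \widetilde{O}_A(\tilde{x}, \tilde{u}, \widetilde{m})$, which by \rdef{symbolic_model} reduces to showing $\tilde{x}_p \in \widetilde{G}_A(\tilde{x}, \tilde{u}, p)$, i.e., $\|\tilde{x}_p - \phi(\tilde{x}, \tilde{u}, p)\| \leq L_x^p \varepsilon + \eta_x$ for every $p \in \mathbb{N}_{1:\widetilde{m}}$. Here I would invoke \rlem{liplem}: since $\|\tilde{x} - x\| \leq \varepsilon$, we have $\|\phi(x,u,p) - \phi(\tilde{x}, \tilde{u}, p)\| \leq L_x^p \varepsilon$ (using $u = \tilde{u}$). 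Combining with $\|\tilde{x}_p - x_p\| \leq \eta_x$ via the triangle inequality gives $\|\tilde{x}_p - \phi(\tilde{x}, \tilde{u}, p)\| \leq \eta_x + L_x^p \varepsilon = \varepsilon_p$, establishing the required membership. I do not anticipate a serious obstacle here; the main thing to be careful about is ensuring that $\tilde{x}_p$ chosen by nearest-lattice rounding simultaneously (i) lies within distance $\varepsilon$ of $x_p$, which uses $\varepsilon \geq \eta_x$, and (ii) lies in the ball $\mathcal{B}_{\varepsilon_p}(\phi(\tilde{x},\tilde{u},p))$, which uses the Lipschitz bound with the same exponent $p$ that defines $\varepsilon_p$---the two estimates match precisely because $\varepsilon_p$ was tailored to absorb both the propagation of the mismatch $\varepsilon$ and the quantization radius $\eta_x$.
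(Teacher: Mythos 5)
Your proposal is correct and follows essentially the same route as the paper's proof: verify (D1) by taking $x_0=\tilde{x}_0$, (D2) from the definition of $R(\varepsilon)$, and (D3) by choosing $u=\tilde{u}$, $m=\widetilde{m}$, rounding each $x_p=\phi(x,u,p)$ to $\tilde{x}_p\in\mathsf{Nearest}_{\widetilde{X}}(x_p)$, and combining the quantization bound $\|\tilde{x}_p-x_p\|\leq\eta_x\leq\varepsilon$ with \rlem{liplem} and the triangle inequality to land in ${\cal B}_{\varepsilon_p}(\phi(\tilde{x},\tilde{u},p))$. Your closing remark correctly identifies the two places where $\varepsilon\geq\eta_x$ and the definition $\varepsilon_p=L_x^p\varepsilon+\eta_x$ are each needed, which is exactly how the paper's argument closes.
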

\begin{proof}
Let $R(\varepsilon)$ be given by \req{relation} with $\varepsilon \geq \eta_x$. Since $\widetilde{X}_0 \subseteq X_0$, for every $\tilde{x}_0 \in \widetilde{X}_0$ there exists ${x}_0 = \tilde{x}_0 \in X_0$ such that $\|\tilde{x}_0 - x_0 \| = 0 \leq \varepsilon$. Hence, the condition (D1) in \rdef{asr2} holds. 
The condition (D2) is satisfied from \req{relation}. 
To check (D3), consider any $(\tilde{x}, x) \in R(\varepsilon)$ and $(\tilde{u}, \widetilde{m}) \in \widetilde{U} \times \widetilde{M}$. Let $u = \tilde{u} \in U$, $m= \widetilde{m} \in M$ and consider $\{x_1, \ldots, x_{m} \} \in O_A (x, u, m)$. This implies from \rdef{switchedsystems} that $x_p = \phi (x, u, p)$, $\forall p \in \mathbb{N}_{1:m}$. Now, pick $\tilde{x}_p \in \mathsf{Nearest}_{\widetilde{X}} (x_p)$, $\forall p\in \mathbb{N}_{1:m}$. Since $\widetilde{X} = [X]_{\eta_x}$, it follows that $\| \tilde{x}_p - x_p\| \leq \eta_x$, $\forall p\in \mathbb{N}_{1:m}$, i.e., $(\tilde{x}_p , x_p) \in R (\varepsilon)$, $\forall p \in \mathbb{N}_{1:m} (= \mathbb{N}_{1:\widetilde{m}})$. 
%Now, pick $\tilde{x}^+ \in \mathsf{Nearest}_{\widetilde{X}} (x^+)$. It follows that $\| \tilde{x}^+ - x^+\| \leq \eta_x$. 
Then, we have 
\begin{align}
\| \tilde{x}_p - \phi(\tilde{x}, \tilde{u}, p) \| &\leq  \|x_p - \phi(\tilde{x}, \tilde{u}, p)\| + \eta_x \\
&\leq L_x^p \| x- \tilde{x} \| + \eta_x \leq  L_x^p \varepsilon + \eta_x, \notag 
\end{align}
for all $p\in \mathbb{N}_{1:\widetilde{m}}$, where we used $u = \tilde{u}$ and \rlem{liplem}. Hence, $\tilde{x}_p \in {\cal B}_{\varepsilon_p} (\phi(\tilde{x}, \tilde{u}, p))$ with $\varepsilon_p = L_x^p \varepsilon + \eta_x$, $\forall p \in \mathbb{N}_{1:\widetilde{m}}$ and thus $\tilde{x}_p \in \widetilde{G}_A (\tilde{x}, \tilde{u}, p)$, $\forall p\in \mathbb{N}_{1:\widetilde{m}}$. Thus, $\{\tilde{x}_1, \ldots, \tilde{x}_{\widetilde{m}}\} \in \widetilde{O}_A(\tilde{x}, \tilde{u}, \tilde{m})$ and $(\tilde{x}_p , x_p) \in R (\varepsilon)$, $\forall p \in \mathbb{N}_{1:m} (= \mathbb{N}_{1:\widetilde{m}})$. 
%and $(\tilde{x}^+, x^+) \in R(\varepsilon)$. 
Therefore, $R(\varepsilon)$ is a strong $\varepsilon$-ASR from $\widetilde{\Sigma}^\varepsilon _{A}$ to ${\Sigma} _{A}$. 
\end{proof}

We say that $\tilde{x}_{0}, \tilde{x}_{1}, \ldots \in \widetilde{X}$ is a \textit{trajectory} of $\widetilde{\Sigma}^\varepsilon _{A}$, if $\tilde{x}_{0} \in \widetilde{X}_0$ and there exist $\widetilde{m}_0, \widetilde{m}_1, \ldots \in \widetilde{M}$ and $\tilde{u}_{k_0}, \tilde{u}_{k_1}, \ldots \in \widetilde{U}$, such that $\{\tilde{x}_{k_\ell+1}, \ldots, \tilde{x}_{k_{\ell+1}} \} \in \widetilde{O}_A (\tilde{x}_{k_\ell}, \tilde{u}_{k_\ell}, \widetilde{m}_\ell)$, $\forall \ell \in \mathbb{N}_{\geq 0}$, where $k_0 = 0$, $k_{\ell+1} = k_\ell + \widetilde{m}_\ell$, $\forall \ell \in \mathbb{N}_{\geq 0}$. 
%\vspace{-0.2cm}
%\begin{comment}
%\end{comment}
%To illustrate this, suppose that $x_0 \in {\cal B}_{\varepsilon}(\tilde{x})$ and let $x_p = {\bf x}(\tilde{x}, \tilde{u}, p, w_{0: p-1})$ for some $w_{0}, \ldots, w_{p-1} \in W$. %To define the validity of the controller, let $\widetilde{X}_F$ be given by $\widetilde{X}_F = [\mathsf{Int}_\varepsilon (X_F) ]_{\eta_x} \subset \widetilde{X}$. 
%A \textit{trajectory} of $\widetilde{\Sigma}^\varepsilon _{A}$ is defined as sequence of states 

\section{Self-triggered controller synthesis}\label{selftrig_sec}
In this section, we present a concrete algorithm to synthesize a valid controller for the system \req{dynamics2} as a solution to \rpro{problem}. 
Given $\Sigma_{A}$, suppose that $\widetilde{\Sigma}^\varepsilon _{A}$ is constructed such that $R(\varepsilon)$ is a strong $\varepsilon$-ASR from $\widetilde{\Sigma}^\varepsilon _{A}$ to $\Sigma_{A}$, where $R(\varepsilon)$ is defined in \req{relation}. 
%We begin by synthesizing a controller for $\widetilde{\Sigma}^\varepsilon _{A}$, which is followed by synthesizing a controller for the control system \req{dynamics2}. 
In what follows, we first synthesize a controller for $\widetilde{\Sigma}^\varepsilon _{A}$. Then, we synthesize a controller for $\Sigma_A$ as well as for the system \req{dynamics2}, based on the controller for $\widetilde{\Sigma}^\varepsilon _{A}$. 

As with \req{controller}, let a controller for $\widetilde{\Sigma}^\varepsilon _{A}$ be given by 
$\widetilde{C} : \widetilde{X} \rightarrow 2^{\widetilde{U} \times \widetilde{M}}$. 
 {Given ${\widetilde{C}}$, we say that the sequence $\tilde{x}_{0}, \tilde{x}_{1}, \ldots \in \widetilde{X}$ is a \textit{controlled} trajectory of $\widetilde{\Sigma}^\varepsilon _{A}$ induced by  $\widetilde{C}$}, if it is a trajectory of $\widetilde{\Sigma}^\varepsilon _{A}$ with $\{\tilde{u}_{k_\ell}, \widetilde{m}_\ell \} \in \widetilde{C} ( \tilde{x}_{k_\ell})$, $\forall \ell \in \mathbb{N}_{\geq 0}$. 
%$k_{\ell+1} = k_\ell + \widetilde{m}_\ell$. 
%if $\tilde{x}_0 \in \widetilde{X}_0$, $x_{k+1} = f(x_k, u_{k_\ell})$, $\forall k \in \mathbb{N}_{k_\ell: k_{\ell+1}-1}$ and $k_{\ell+1} = k_\ell + m_\ell$, $\forall \ell \in \mathbb{N}_{\geq 0}$, where $\{ u_{k_\ell}, m_\ell \} \in C (x_{k_\ell})$, $\forall \ell \in \mathbb{N}_{\geq 0}$. 
%For a given ${\widetilde{C}}$, we can define a {controlled} trajectory of $\widetilde{\Sigma}^\varepsilon _{A}$ as a trajectory $\tilde{x}_{0}, \tilde{x}_{1}, \tilde{x}_{2}, \ldots $ of $\widetilde{\Sigma}^\varepsilon _{A}$ with $\tilde{u}_{k_\ell} = \widetilde{\kappa} (\tilde{x}_{k_\ell})$, $k_{\ell+1} = k_\ell + \widetilde{\gamma} (x_{k_\ell})$, $\forall \ell \in \mathbb{N}_{\geq 0}$.
%be a controller for $\widetilde{\Sigma}^\varepsilon _{A}$ given by $\widetilde{\kappa} : \widetilde{X} \rightarrow \widetilde{U}$ denotes the mapping from the state in $\widetilde{X}$ to the control input in $\widetilde{U}$, and $\widetilde{\gamma} : \widetilde{X} \rightarrow \widetilde{M}$ denotes the mapping from the state in $\widetilde{X}$ to the inter-communication time steps in $\widetilde{M}$. 
%That is, $\widetilde{C}$ is the controller that deals with only the states in $\widetilde{X}$, control inputs in $\widetilde{U}$ and the inter-communication times steps in $\widetilde{M}$. 
Moreover, let $\widetilde{X}_S = [\mathsf{Int}_{\varepsilon} (X_S)]_{\eta_x}$ and  $\widetilde{X}_F = [\mathsf{Int}_{\varepsilon} (X_F)]_{\eta_x}$. 
%$\widetilde{X}_S \subset {X}_S$ and $\widetilde{X}_F \subset {X}_F$ be given by 
This implies that 
\begin{align}\label{property}
\!\! \tilde{x} \in \widetilde{X}_S ({\rm or}\ \widetilde{X}_F), (\tilde{x}, x) \in R(\varepsilon) {\small \implies} x \in X_S({\rm or}\ {X}_F). 
\end{align}
Note that the sets $\widetilde{X}_S$, $\widetilde{X}_F$ are both finite, since $X_S$ and $X_F$ are both compact. 
The validity of the controller ${\widetilde{C}}$ for $\widetilde{\Sigma}^\varepsilon _{A}$ with $(\widetilde{X}_S, \widetilde{X}_F)$ is defined in the same way as \rdef{validity}. 

\begin{algorithm}[t]\label{overall_alg}
{\small 
\SetKwInOut{Input}{Input}
\SetKwInOut{Output}{Output}
\Input{$\widetilde{\Sigma}^{\varepsilon}_A$, $\widetilde{X}_S$, $\widetilde{X}_F$} 
\Output{${\cal L} (\tilde{x})$, $\forall \tilde{x} \in \widetilde{X}_S$, $\widetilde{P}_S \subseteq \widetilde{X}_S$}
$n \leftarrow 0$, $P^{(n)} \leftarrow \widetilde{X}_F$; \\
${\cal L} (\tilde{x}) \leftarrow 0, \ \forall \tilde{x} \in \widetilde{X}_F$; \\
\While {$P^{(n+1)}  \neq P^{(n)}$} {
$P^{(n+1)} \leftarrow P^{(n)} \cup {\rm Pre} (P^{(n)})$;  \\ 
${\cal L} (\tilde{x}) \leftarrow n+1$, $\forall \tilde{x} \in P^{(n+1)}\backslash P^{(n)}$; \label{numcom} \\
$n \leftarrow n+1$; 
}
$\widetilde{P}_S \leftarrow P^{(n)}$,\ ${\cal L} (\tilde{x}) \leftarrow \infty$, $\forall \tilde{x} \in \widetilde{X}_S \backslash \widetilde{P}_S$; 
    \caption{{\small Controller synthesis for $\widetilde{\Sigma}^\varepsilon _{A}$.}} 
    }
\end{algorithm}

Now, consider the problem of synthesizing a valid controller for $\widetilde{\Sigma}^\varepsilon _{A}$ with the specification $(\widetilde{X}_S, \widetilde{X}_F)$. The controller for $\widetilde{\Sigma}^\varepsilon _{A}$ can be found by employing a reachability game \cite{tabuadabook2009}, which is summarized in \ralg{overall_alg}. In the algorithm, the map ${\rm Pre} : 2^{\widetilde{X}_S} \rightarrow 2^{\widetilde{X}_S}$ is given by 
\begin{align}
{\rm Pre}& (P) = \{ \tilde{x} \in \widetilde{X}_S\ |\ \exists (\tilde{u}, \widetilde{m}) \in \widetilde{U} \times \widetilde{M}: \notag \\ 
&\!\!\!\!\!\! \forall \tilde{x}^+ \in \widetilde{G}_A (\tilde{x}, \tilde{u}, \widetilde{m}),\ \tilde{x}^+ \in P,\ \widetilde{O}_A (\tilde{x}, \tilde{u}, \widetilde{m}) \subseteq {\widetilde{X}^*_S} \}. \label{pre} 
\end{align}
 {Roughly speaking, ${\rm Pre}(P)$ is the set of all states in $\widetilde{X}_S$, for which there exists a pair of control input and inter-communication time step, such that all the corresponding successors according to the transition map $\widetilde{G}_A$ are inside $P$ (i.e., $\tilde{x}^+ \in P$), while all the corresponding outputs are inside $\widetilde{X}^* _S$.}
As shown in the algorithm, we iteratively compute $P^{(n)}$, $n = 0, 1, \ldots$ until it converges to a fixed point set denoted as $\widetilde{P}_S$. Intuitively, $P^{(n)}$ is the set of all states in $\widetilde{X}_S$ that can reach $\widetilde{X}_F$ within $n$ transitions. 
 %where $n \in\mathbb{N}_{\geq 0}$ indicates the iteration number. In other words, $P^{(n)}$ represents the set of all states in $\widetilde{X}_S$ that can reach $\widetilde{X}_F$ with at most $n$ transitions. 
The map ${\cal L} : \widetilde{X}_S \rightarrow \widetilde{M}$ is used to stack, for each state in $\widetilde{X}_S$, the number of transitions required to reach $\widetilde{X}_F$. %Note that we have ${\cal L} (\tilde{x}) = \infty$, $\forall \tilde{x} \in \widetilde{X}_S \backslash \widetilde{P}_S$ to indicate that the states that are not included in $\widetilde{P}_S$ can fail to reach $\widetilde{X}_F$ in finite time. 
\ralg{overall_alg} is guaranteed to terminate with a finite number of iterations, since $\widetilde{X}_S$, $\widetilde{X}_F$, $\widetilde{U}$ and $\widetilde{M}$ are all finite. Based on \ralg{overall_alg}, we construct the controller for $\widetilde{\Sigma}^\varepsilon _{A}$ as follows:  {
\begin{align}\label{controller_synthesis}
\widetilde{C} (\tilde{x}) = \{ (\tilde{u}, \widetilde{m}) \in \widetilde{U} \times \widetilde{M}\ |\ \forall \tilde{x}^+ \in \widetilde{G}_A (\tilde{x}, \tilde{u}, \widetilde{m}) :  \notag \\  {\cal L} (\tilde{x}^+) < {\cal L} (\tilde{x}) ,\ \widetilde{O}_A (\tilde{x}, \tilde{u}, \widetilde{m}) \subseteq {\widetilde{X}^* _S} \}, 
\end{align}}
for all $\tilde{x} \in \widetilde{P}_S$. 
%That is, for each $\tilde{x}\in \widetilde{P}_S$ we associate the set of all pairs of control input and inter-communication time step, such that all the corresponding states that can be transitioned from $\tilde{x}$ (i.e., $\tilde{x}^+ \in \widetilde{G}_A (\tilde{x}, \tilde{u}, \widetilde{m})$) lead to smaller values of ${\cal L}$. 
The following result shows that the above controller is proven valid with a certain initial condition: 
\begin{mylem}\label{validity_symbolic}
%Let $\tilde{x}_0 \in \widetilde{X}_S$ be the initial state of $\widetilde{\Sigma}^\varepsilon _{A}$, and 
%Suppose that \ralg{overall_alg} is implemented and 
%Let  $\tilde{x}_0 \in \widetilde{X}_S$ be the initial state of $\widetilde{\Sigma}^\varepsilon _{A}$. Also, 
Let $\widetilde{C}$ be the controller for $\widetilde{\Sigma}^\varepsilon _{A}$ as derived in \req{controller_synthesis}. Then, $\widetilde{C}$ is valid for $\widetilde{\Sigma}^\varepsilon _{A}$ with the specification 
$(\widetilde{X}_S, \widetilde{X}_F)$, if $\widetilde{X}_0 \subseteq \widetilde{P}_S$. %Moreover, given $\widetilde{x}_0 \in \widetilde{X}_0$, the controlled trajectory is guaranteed to reach $\widetilde{X}_F$ within ${\cal L} (\tilde{x}_0)$ number of communication time steps. 
\qedwhite
\end{mylem}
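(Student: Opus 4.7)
The plan is to argue by strong induction on the value function ${\cal L}$ produced by \ralg{overall_alg}, exploiting the two properties enforced by \req{controller_synthesis}: a strict decrease of ${\cal L}$ between consecutive communication times, and inclusion of the entire intermediate trajectory in $\widetilde{X}^*_S$.

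As a preliminary step, I would verify that $\widetilde{C}(\tilde{x}) \neq \emptyset$ for every $\tilde{x} \in \widetilde{P}_S$ with ${\cal L}(\tilde{x}) \geq 1$. If ${\cal L}(\tilde{x}) = n \geq 1$, then by construction $\tilde{x} \in P^{(n)} \setminus P^{(n-1)} \subseteq {\rm Pre}(P^{(n-1)})$, and \req{pre} supplies some $(\tilde{u}, \widetilde{m}) \in \widetilde{U} \times \widetilde{M}$ with $\widetilde{G}_A(\tilde{x}, \tilde{u}, \widetilde{m}) \subseteq P^{(n-1)}$ and $\widetilde{O}_A(\tilde{x}, \tilde{u}, \widetilde{m}) \subseteq \widetilde{X}^*_S$. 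Each successor $\tilde{x}^+$ then belongs to $P^{(n-1)}$, so ${\cal L}(\tilde{x}^+) \leq n-1 < {\cal L}(\tilde{x})$, and this pair meets both defining conditions of \req{controller_synthesis}, hence lies in $\widetilde{C}(\tilde{x})$.

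Next I would fix $\tilde{x}_0 \in \widetilde{X}_0 \subseteq \widetilde{P}_S$ and an arbitrary controlled trajectory of $\widetilde{\Sigma}^\varepsilon_A$ induced by $\widetilde{C}$, with communication times $k_0, k_1, \ldots$ At each $k_\ell$ the chosen pair $(\tilde{u}_{k_\ell}, \widetilde{m}_\ell) \in \widetilde{C}(\tilde{x}_{k_\ell})$ guarantees, by \req{controller_synthesis}, that ${\cal L}(\tilde{x}_{k_{\ell+1}}) < {\cal L}(\tilde{x}_{k_\ell})$ and that all intermediate states $\tilde{x}_{k_\ell+1}, \ldots, \tilde{x}_{k_{\ell+1}}$ lie in $\widetilde{X}_S$. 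In particular ${\cal L}(\tilde{x}_{k_{\ell+1}}) < \infty$, so $\tilde{x}_{k_{\ell+1}} \in \widetilde{P}_S$, and by the nonemptiness claim the controller is again applicable at the next communication time whenever ${\cal L}(\tilde{x}_{k_{\ell+1}}) \geq 1$.

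Since $\{{\cal L}(\tilde{x}_{k_\ell})\}_\ell$ is a strictly decreasing sequence in $\mathbb{N}_{\geq 0}$ starting from the finite value ${\cal L}(\tilde{x}_0)$, there exists a smallest $N$ with ${\cal L}(\tilde{x}_{k_N}) = 0$, i.e., $\tilde{x}_{k_N} \in \widetilde{X}_F$, which is (C1). Combining this with the stepwise inclusions above for $\ell = 0, \ldots, N-1$, together with $\widetilde{X}_F \subseteq \widetilde{X}_S$, yields (C2). I do not foresee any substantive obstacle; the only delicate bookkeeping is keeping the controller well-defined along the trajectory, which is handled entirely by the nonemptiness argument in the preliminary step.
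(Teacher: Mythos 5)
Your proposal is correct and follows essentially the same route as the paper's proof: establish nonemptiness of $\widetilde{C}(\tilde{x})$ from $\tilde{x} \in P^{(n)} \setminus P^{(n-1)} \subseteq {\rm Pre}(P^{(n-1)})$, then use the strict decrease of ${\cal L}$ along communication times together with $\widetilde{O}_A(\cdot) \subseteq \widetilde{X}^*_S$ to conclude reachability and safety. The only cosmetic difference is that you invoke well-ordering of the strictly decreasing sequence $\{{\cal L}(\tilde{x}_{k_\ell})\}_\ell$, whereas the paper tracks the inclusions $\tilde{x}_{k_\ell} \in P^{(N-\ell)}$ down to $P^{(0)} = \widetilde{X}_F$; these are the same argument.
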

\begin{proof}
Suppose that $\widetilde{X}_0 \subseteq \widetilde{P}_S$ and let $\tilde{x}_{k_0} \in \widetilde{X}_0$ with $k_0 = 0$ be the initial state of $\widetilde{\Sigma}^{\varepsilon} _A$. 
%and let $\tilde{x}_0, \tilde{x}_1, \ldots$ with $\tilde{x}_0 \in \widetilde{X}_0$ be an \textit{any} controlled trajectory by applying the controller in \req{controller_synthesis}. 
Since $\tilde{x}_{k_0} \in \widetilde{P}_S$, there exists $N\in\mathbb{N}_{\geq 0}$ such that ${\cal L} (\tilde{x}_{k_0}) = N$. This means that $\tilde{x}_{k_0} \in P^{(N)} \backslash P^{(N-1)} \subseteq {\rm Pre} (P^{(N-1)})$, and, from \req{pre}, there exists $(\tilde{u}', \widetilde{m}') \in \widetilde{U}\times \widetilde{M}$ such that $\forall \tilde{x}^+ \in \widetilde{G}_A (\tilde{x}_{k_0}, \tilde{u}', \widetilde{m}'),\ \tilde{x}^+ \in P^{(N-1)}$ (i.e., ${\cal L} (\tilde{x}^+) \leq  N-1$), $\widetilde{O}_A (\tilde{x}_{k_0}, \tilde{u}, \widetilde{m}) \subseteq {\widetilde{X}^*_S}$. Hence, the controller in \req{controller_synthesis} is feasible, i.e., $\widetilde{C} (\tilde{x}_{k_0}) \neq \emptyset$. %Thus, 
%and by applying $\widetilde{C} (\tilde{x}_{k_0})$. 
%{\small 
%\begin{align}
%\min_{(\tilde{u}, \widetilde{m}) \in \widetilde{U} \times \widetilde{M}} &\ \max_{\tilde{x}^+ \in \widetilde{G} (\tilde{x}_{k_0}, \tilde{u}, \widetilde{m})} {\cal L} (\tilde{x}^+)  \notag \\ 
% &\leq \max_{\tilde{x}^+ \in \widetilde{G} (\tilde{x}_{k_0}, \tilde{u}', \widetilde{m}')} L (\tilde{x}^+) \leq N-1. 
%\end{align}}
Thus, by applying $\{\tilde{u}_{k_0}, \widetilde{m}_0 \} \in \widetilde{C} (\tilde{x}_{k_0})$, we obtain $\tilde{x}_{k_1} \in P^{(N-1)}$ and $\tilde{x}_{k} \in \widetilde{X}_S, \forall k\in\mathbb{N}_{k_0+1:k_1}$, where $k_1 = k_0 + \widetilde{m}_0$ and $\{ \tilde{x}_{k_0+1}, \ldots \tilde{x}_{k_1} \} \in \widetilde{O}_A (\tilde{x}_{k_0}, \tilde{u}_{k_0}, \widetilde{m}_0)$. 
%$\{ \tilde{x}_{k_0}, \ldots \tilde{x}_{k_1-1} \} \in \widetilde{O}_A (\tilde{x}, \tilde{u}, \widetilde{m}, \tilde{x}^+) \subseteq {\widetilde{X}^*_S}$, where $k_1 = k_0 + \widetilde{m}_0$. 
%$\tilde{{\bf z}}_1 = \{ \tilde{x}_{k_0}, \ldots \tilde{x}_{k_1-1} \}$, $(\tilde{x}_{k_1}, \tilde{{\bf z}}_1) \in \widetilde{G}_A (\tilde{x}_{k_0}, \tilde{u}_{k_0}, \widetilde{m}_0)$. 
In summary, $\tilde{x}_{k_0} \in P^{(N)} \implies \tilde{x}_{k_1} \in P^{(N-1)}$ and $\tilde{x}_{k} \in \widetilde{X}_S$, $\forall k\in\mathbb{N}_{k_0:k_1}$. 
By recursively applying the above for $k_0, k_1, \ldots$, we obtain 
$\tilde{x}_{k_0} \in P^{(N)} \implies \tilde{x}_{k_1} \in P^{(N-1)} \implies \cdots \implies \tilde{x}_{k_N} \in P^{(0)} = \widetilde{X}_F$ and $\tilde{x}_{k} \in \widetilde{X}_S$, $\forall k\in\mathbb{N}_{k_0:k_N}$. That is, the trajectory achieves reachability and safety. 
Thus, the controller in \req{controller_synthesis} is valid if $\widetilde{X}_0 \subseteq \widetilde{P}_S$. 
%Thus, the controller in \req{controller_synthesis} is valid and the trajectory enters $\widetilde{X}_F$ within $N ( = {\cal L} (\tilde{x}_{0})$ number of communication time steps. 
\end{proof}

Now, we refine the controller for $\Sigma_A$ as well as for the system \req{dynamics2}, based on the controller as derived in \req{controller_synthesis}: 
%Finally, we derive the controller for the original system \req{dynamics2} as follows: 
\begin{align}\label{controller_original}
C (x) = \left \{ \widetilde{C} (\tilde{x}) \ | \ \tilde{x} \in \mathsf{Nearest}_{\widetilde{P}_S} (x) \right \}, \ \forall x\in P_S, 
%\widetilde{C} \left( \mathsf{Nearest}_{\widetilde{P}_S} (x) \right ), \ \forall x\in P_S, 
\end{align}
where $P_S = \{ x \in X_S\ |\ \exists \tilde{x} \in \widetilde{P}_S,\ (\tilde{x}, x) \in R(\varepsilon) \}$.
The controller in \req{controller_original} means that, for each $x \in P_S$, we pick the closest points in $\widetilde{P}_S$ to $x$, and associate the corresponding pairs of the control input and the inter-communication time step. Note that we have $P_S \subseteq X_S$ since $\widetilde{P}_S \subseteq \widetilde{X}_S$ and $\widetilde{X}_S = [\mathsf{Int}_{\varepsilon} (X_S)]_{\eta_x}$. 
%That is, for each $x \in X_S$ we pick up the state from $\widetilde{X}_S$ that is the closest to $x$, and produce the associated pair of the control input and the inter-communication time steps. 
Similarly to \rlem{validity_symbolic}, the following result shows that the controller for the system \req{dynamics2} is proven valid if a certain initial condition is satisfied. 

\begin{mythm}\label{mainresult}
%Suppose that \ralg{overall_alg} is implemented, and 
%Let $x_0 \in X_S$ be the initial state of the system in \req{dynamics2}, and $\tilde{x}_0 \in \widetilde{X}_S$ be given by $\tilde{x}_0 = \mathsf{Nearest}_{\widetilde{X}_S} (x_0)$. Also, 
Let $C$ be the controller as in \req{controller_original}. Then, $C$ is valid for $\Sigma_A$ as well as for the system \req{dynamics2} with the specification $({X}_S, {X}_F)$, if ${X}_0 \subseteq {P}_S$. 
%Moreover, given $x_0 \in X_0$ and $\tilde{x}_0 = \mathsf{Nearest}_{\widetilde{X}_0} (x_0)$, the controlled trajectory from $x_0$ is guaranteed to enter $X_F$ within ${\cal L} (\tilde{x}_0)$ number of communication time steps. 
\qedwhite 
\end{mythm}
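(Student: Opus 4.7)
The plan is to lift the validity of $\widetilde{C}$ on the symbolic model (\rlem{validity_symbolic}) to validity of $C$ on $\Sigma_A$ and on the concrete system \req{dynamics2}, using the strong $\varepsilon$-ASR $R(\varepsilon)$ provided by \rlem{asrresult}. Concretely, I would fix any $x_0 \in X_0 \subseteq P_S$ and any controlled trajectory $x_0, x_1, x_2, \ldots$ of \req{dynamics2} induced by $C$, and construct inductively a companion abstract trajectory $\tilde{x}_{k_0}, \tilde{x}_{k_1}, \ldots$ in $\widetilde{\Sigma}^\varepsilon_A$ that tracks it through $R(\varepsilon)$, with strictly decreasing ``distance-to-target'' value $\mathcal{L}(\tilde{x}_{k_\ell})$.

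The base case uses the definition of $P_S$: because $x_0 \in P_S$ there exists some $\tilde{x}' \in \widetilde{P}_S$ at distance at most $\varepsilon$ from $x_0$, so any $\tilde{x}_{k_0} \in \mathsf{Nearest}_{\widetilde{P}_S}(x_0)$ also lies within $\varepsilon$ of $x_0$ and belongs to $\widetilde{P}_S$. Setting $N := \mathcal{L}(\tilde{x}_{k_0})$ gives a finite target horizon because $\tilde{x}_{k_0} \in \widetilde{P}_S$. By \req{controller_original}, the input pair chosen by the plant at time $k_0$ satisfies $\{u_{k_0}, m_0\} = \{\tilde{u}_{k_0}, \widetilde{m}_0\} \in \widetilde{C}(\tilde{x}_{k_0})$.

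For the inductive step at communication time $k_\ell$, I would invoke condition (D3) of the strong $\varepsilon$-ASR at $(\tilde{x}_{k_\ell}, x_{k_\ell})$ with the common input pair $(u_{k_\ell}, m_\ell) = (\tilde{u}_{k_\ell}, \widetilde{m}_\ell)$, producing an abstract output $\{\tilde{x}_{k_\ell+1}, \ldots, \tilde{x}_{k_{\ell+1}}\} \in \widetilde{O}_A(\tilde{x}_{k_\ell}, \tilde{u}_{k_\ell}, \widetilde{m}_\ell)$ with $(\tilde{x}_p, x_p) \in R(\varepsilon)$ for every $p$. The controller definition \req{controller_synthesis} then forces $\widetilde{O}_A(\tilde{x}_{k_\ell}, \tilde{u}_{k_\ell}, \widetilde{m}_\ell) \subseteq \widetilde{X}_S^*$ and $\mathcal{L}(\tilde{x}_{k_{\ell+1}}) < \mathcal{L}(\tilde{x}_{k_\ell})$, so the invariant propagates and the level strictly decreases. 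Applying property \req{property} translates $\tilde{x}_p \in \widetilde{X}_S$ to $x_p \in X_S$ for every intermediate $p$.

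Iterating at most $N$ communication epochs drives $\mathcal{L}(\tilde{x}_{k_N}) = 0$, hence $\tilde{x}_{k_N} \in \widetilde{X}_F$, and \req{property} gives $x_{k_N} \in X_F$, proving (C1); the stepwise safety above establishes (C2). The main obstacle is handling the set-valued freedom in $C$: at each new communication time the controller may re-select a different element of $\mathsf{Nearest}_{\widetilde{P}_S}(x_{k_{\ell+1}})$ rather than the ``evolved'' $\tilde{x}_{k_{\ell+1}}$ returned by (D3). The proof must verify that any such re-selection still lies within $\varepsilon$ of $x_{k_{\ell+1}}$ (true because the evolved point already witnesses that some element of $\widetilde{P}_S$ is within $\varepsilon$ of $x_{k_{\ell+1}}$) so that the inductive invariant $(\tilde{x}_{k_{\ell+1}}, x_{k_{\ell+1}}) \in R(\varepsilon)$ can be re-established with the freshly chosen abstract state before continuing the induction.
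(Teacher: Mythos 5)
Your proposal follows essentially the same route as the paper's proof: initialize the abstract companion state by $\mathsf{Nearest}_{\widetilde{P}_S}(x_{k_0})$, propagate the relation $R(\varepsilon)$ over each inter-communication interval via condition (D3) of the strong $\varepsilon$-ASR from \rlem{asrresult}, import the strict decrease of ${\cal L}$ and the containment $\widetilde{O}_A(\cdot) \subseteq \widetilde{X}^*_S$ from \req{controller_synthesis} and the proof of \rlem{validity_symbolic}, and translate back to $X_S, X_F$ through \req{property}; the final step (validity for \req{dynamics2} because it shares controlled trajectories with $\Sigma_A$) is also as in the paper. The one substantive difference is that you explicitly confront the set-valued re-selection in \req{controller_original}, which the paper's proof silently bypasses by carrying the evolved abstract state $\tilde{x}_{k_{\ell+1}}$ delivered by (D3) into the next epoch. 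Your resolution of that obstacle is only half complete: showing that a freshly chosen element of $\mathsf{Nearest}_{\widetilde{P}_S}(x_{k_{\ell+1}})$ still satisfies $(\cdot\,, x_{k_{\ell+1}}) \in R(\varepsilon)$ re-establishes the invariant needed for safety, but it does not guarantee that its ${\cal L}$-value is smaller than ${\cal L}$ at the previous epoch --- $\varepsilon$-closeness says nothing about ${\cal L}$ --- so the claim that at most $N$ epochs suffice, and hence reachability (C1), is not yet justified under re-selection. To close this you must either run the induction with the evolved abstract state, whose membership in $P^{(N-\ell-1)}$ is guaranteed by the proof of \rlem{validity_symbolic} (this is what the paper implicitly does), or supply an additional argument bounding the ${\cal L}$-value of the re-selected state.
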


\begin{proof}
Suppose that ${X}_0 \subseteq {P}_S$ and let $x_{k_0} \in X_0$ with $k_0 = 0$ be the initial state of the system \req{dynamics2}. 
%and let $x_0, x_1, x_2, \ldots $ with $x_0 \in X_0$ be an any controlled trajectory by applying the controller in \req{controller_original}. 
Pick $\tilde{x}_{k_0} \in  \mathsf{Nearest}_{\widetilde{P}_S} (x_{k_0})$. 
Since $x_{k_0} \in P_S$, it follows that $(\tilde{x}_{k_0}, x_{k_0}) \in R(\varepsilon)$. 
 %  $\tilde{x}_{k_0} \in \widetilde{X}_0$ with $\tilde{x}_{k_0} \in  \mathsf{Nearest}_{\widetilde{X}_S} (x_{k_0})$ and $(\tilde{x}_{k_0}, x_{k_0}) \in R(\varepsilon)$. 
Moreover, since $\tilde{x}_{k_0} \in \widetilde{P}_S$ there exists an $N\in\mathbb{N}_{\geq 0}$ such that ${\cal L} (\tilde{x}_{k_0}) = N$. 
%Since $\widetilde{X}_0 = [X_0]_{\eta_x}$, we have $(\tilde{x}_{k_0}, x_{k_0}) \in R(\varepsilon)$, where $R(\cdot)$ is defined in \req{relation}. 
Now, pick $\{\tilde{u}_{k_0}, \widetilde{m}_0 \} \in \widetilde{C} (\tilde{x}_{k_0})$ and $\{{u}_{k_0}, {m}_0 \} \in {C} ({x}_{k_0})$ with ${u}_{k_0} = \tilde{u}_{k_0}$, ${m}_0 = \widetilde{m}_0$. 
Let ${x}_{k_{0}+1}, \ldots, {x}_{k_{1}}$ with $k_1 = k_0 + m_0$ be the controlled trajectory of $\Sigma_A$ by applying $u_{k_0}$ for $m_0$ steps, i.e., 
$\{x_{k_0+1}, \ldots, x_{k_{1}}\} \in O_A (x_{k_0}, u_{k_0}, m_0)$. 
 {Then, from the fact that $R(\varepsilon)$ is a strong $\varepsilon$-ASR from $\widetilde{\Sigma}^\varepsilon _A$ to $\Sigma_A$ and from \rdef{asr2}, there exists $\{\tilde{x}_{k_0+1}, \ldots, \tilde{x}_{k_{1}}\} \in \widetilde{O}_A(\tilde{x}_{k_0}, \tilde{u}_{k_0}, \widetilde{m}_0)$, such that $(\tilde{x}_{k}, x_k) \in R(\varepsilon)$, $\forall k\in\mathbb{N}_{k_0 +1 : k_1}$.} 
% {Then, from %\rdef{asr2}, the fact that $R(\varepsilon)$ is $\varepsilon$-ASR from $\widetilde{\Sigma}^\varepsilon _A$ to $\Sigma_A$ and from the proof of \rlem{asrresult}, there exists $\{\tilde{x}_{k_0+1}, \ldots, \tilde{x}_{k_{1}}\} \in \widetilde{O}_A(\tilde{x}_{k_0}, \tilde{u}_{k_0}, \widetilde{m}_0)$, such that $(\tilde{x}_{k}, x_k) \in R(\varepsilon)$, $\forall k\in\mathbb{N}_{k_0 +1 : k_1}$.} 
Moreover, from the proof of \rlem{validity_symbolic}, it follows that $\tilde{x}_{k_{1}} \in P^{(N-1)} \subseteq \widetilde{P}_S$ and $\tilde{x}_{k} \in \widetilde{X}_S$, $\forall k\in\mathbb{N}_{k_0:k_1}$. In summary, we obtain $(\tilde{x}_{k}, x_k) \in R(\varepsilon), \tilde{x}_{k} \in \widetilde{X}_S$, $\forall k\in\mathbb{N}_{k_0 : k_1}$, and $\tilde{x}_{k_{1}} \in P^{(N-1)}$. 
Thus, by recursively applying the above procedure for $k_0, k_1, \ldots$, it follows that $(\tilde{x}_{k}, x_k) \in R(\varepsilon)$, $\tilde{x}_k \in \widetilde{X}_S$, $\forall k\in\mathbb{N}_{k_0:k_N}$ and $\tilde{x}_{k_N} \in P^{(0)} = \widetilde{X}_F$. 
%Thus, by recursively applying the above procedure for $k_0, k_1, \ldots$, it follows that $\tilde{x}_p \in \widetilde{X}_S$, $\forall p\in\mathbb{N}_{k_0:k_N}$ and $\tilde{x}_{k_N} \in \widetilde{X}_F$. %where $\tilde{x}_{k} = \mathsf{Nearest}_{\widetilde{X}_S} (x_{k})$, $\forall k\in \mathbb{N}_{\geq 0}$. 
%$\tilde{x}_0, \tilde{x}_1, \tilde{x}_2, \ldots $ with $\tilde{x}_{k} = \mathsf{Nearest}_{\widetilde{X}_S} (x_{k})$, $\forall k\in \mathbb{N}_{\geq 0}$ is the controlled trajectory of $\widetilde{\Sigma}^\varepsilon _{A}$. 
%Since $\tilde{X}_0 \subseteq \widetilde{P}_S$, it follows from \rlem{validity_symbolic} that the controller $\widetilde{C}$ is valid, and there exists a finite $N \in \mathbb{N}_{>0}$, such that $\tilde{x}_k \in \widetilde{X}_S$, $\forall k\in\mathbb{N}_{0:k_N}$ and 
%$\tilde{x}_{k_N} \in \widetilde{X}_F$. 
Since $(\tilde{x}_{k}, x_k) \in R(\varepsilon)$, $\forall k \in\mathbb{N}_{k_0:k_N}$ and by using \req{property}, it then follows that ${x}_k \in {X}_S$, $\forall k\in\mathbb{N}_{k_0:k_N}$ and 
$x_{k_N} \in {X}_F$. %That is, the trajectory achieves the reachability and safety. 
Hence, the controller in \req{controller_original} is valid for $\Sigma_A$ if ${X}_0 \subseteq {P}_S$. 
Moreover, since $\Sigma_A$ and the system \req{dynamics2} produce the same controlled trajectories (see \rsec{induced_sec}), the controller in \req{controller_original} is also valid for the system \req{dynamics2} if ${X}_0 \subseteq {P}_S$. 
\end{proof}
%Some remarks are in order as follows. 
\begin{myrem}[{On the selection of $\eta_x, \eta_u$}] 
 {As $\eta_x, \eta_u$ are selected smaller, the symbolic $\widetilde{\Sigma}^\varepsilon _{A}$ will be more precise with respect to ${\Sigma} _{A}$. 
Intuitively, this implies that the condition $X_0 \subseteq P_S$ is more likely to be satisfied, and, therefore, we may increase the possibility to synthesize the controller. %In addition, if $M_{\max}$ is selected larger, we can increase the possibility to obtain longer inter-communication time steps. 
However, selecting smaller $\eta_x, \eta_u$ leads to a heavier computation load of synthesizing the controller, due to the evaluation of ${\rm Pre} (\cdot)$ in \req{pre}. Thus, users may carefully select $\eta_x, \eta_u$ by considering the trade-off between the possibility to synthesize the controller and the computation load for the controller synthesis.} \qedwhite 
\end{myrem}

\begin{myrem}
 {Even though $X_0 \nsubseteq P_S$ does not hold, one might be still interested in finding a \textit{subset} of initial states $X^* _0 \subseteq X_0$, such that every controlled trajectory from $x_0 \in X^* _0$ achieves reachability and safety. 
This subset can be derived by taking the intersection between $X_0$ and $P_S$, i.e., $X^* _0 = X_0 \cap P_S$.}  \qedwhite 
\end{myrem}

\begin{figure}[tbp]
  \begin{center}
   \includegraphics[width=7.0cm]{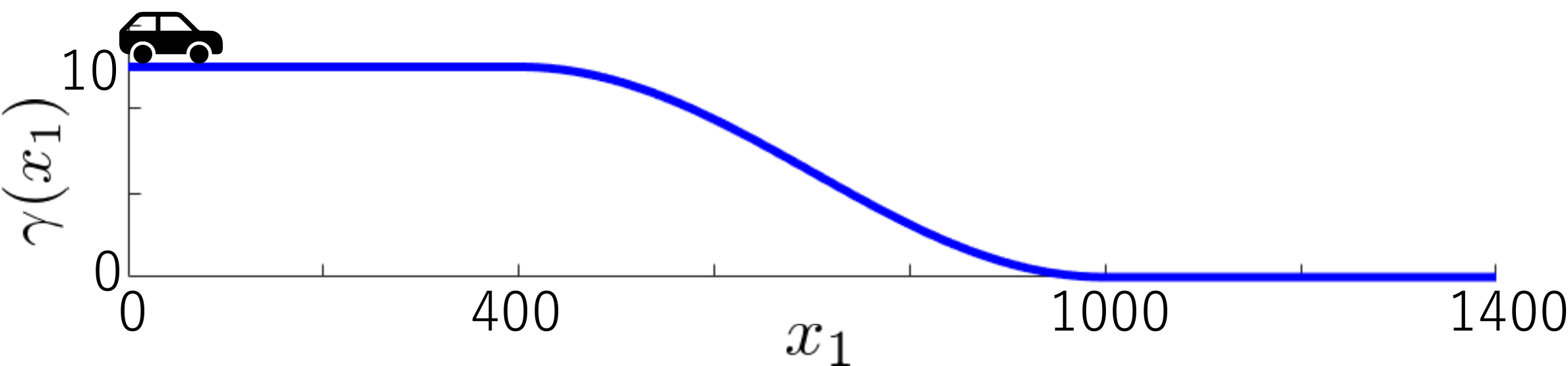}
   \caption{Geometry of the road based on the road grade profile $\alpha$.}
   \label{roadmap}
  \end{center}
  \vspace{-0.5cm}
 \end{figure}

\begin{table}[tbp]
\begin{center}
\caption{\small Parameter settings} \label{params}
{\small 
\begin{tabular}{l||l||l}
$\Delta: 1.0$ & $m_v$: 1000 & $O :  [400, 1000] \times [12, 18]$ \\
$g$: 9.8 & $U : [-500, 500]$ & $X_0 : [0, 50] \times[0, 18]$\\ 
%[0, 1400] \times [0, 18] \backslash [600, 1000] \times [12, 18]$ \\
$d_1$: 0.01 & $X_S : X_P \backslash O$ & $X_F : [1300, 1400]\times[0, 5]$ \\
$d_2$: 0.1 &  $X_P : [0, 1400] \times [0, 18]$
\end{tabular}}
\vspace{-0.5cm}
\end{center}
\end{table}

\section{Illustrative Example}
In this section we illustrate the effectiveness of the proposed approach through a simulation example. 
We consider the problem of controlling a vehicle on a road with a given road grade, which is expressed by the mapping $\alpha : [0, 1400] \rightarrow [-\pi/2, \pi/2]$, where $\alpha (x_1) = 0$, $\forall x_1 \in [0, 400)$, $\alpha (x_1) = -  \frac{\pi}{120}\sin \frac{x_1-400}{600} \pi$, $\forall x_1 \in [400, 1000)$, $\alpha (x_1) = 0$, $\forall x_1 \in [1000, 1400]$, where $x_1$ denotes the position of the vehicle. 
The geometry of the road based on the given $\alpha$ is illustrated in \rfig{roadmap}. In the figure, $\gamma (\cdot)$ denotes the elevation of the road obtained by $\gamma(x_1) = \gamma(0)+ \int^{x_1} _{0} \alpha (x) {\rm d}x$ with $\gamma(0) = 10$. Let $x_{k} = [x_{1,k}; x_{2,k}] \in \mathbb{R}^2$ be the state, where $x_{1,k}$ is the position and $x_{2,k}$ is the velocity of the vehicle at $k$. 
%with $x_{1, \max} = 1400$, $x_{2, \max} = 18$ are the position and the velocity of the vehicle at $k$, respectively. In addition, let $\alpha : [0, x_{1,\max}] \rightarrow [-\pi/2, \pi/2]$ be the map that represents the road grade, i.e., $\alpha (x_{1,k})$ is the road grade at the position $x_{1,k}$.  
We assume that the dynamics of the vehicle is given by 
\begin{align}
{x}_{1,k+1} &= x_{1,k} + x_{2, k} \Delta, \\ 
{x}_{2,k+1} &=  {x_{2, k}} + \frac{1}{m_v} {\left (u_k - h (x_{1,k}, x_{2,k}) \right)}\Delta, 
\end{align}
where $u_k \in U$ is the force applied to the vehicle as the control input, $m_v$ is the mass of the vehicle, and $\Delta$ is the sampling time period. $h(x_1, x_2)$ is the nonlinear term given by $h(x_1, x_2) = d_1 x_2 + d_2 m_v g \cos (\alpha (x_1)) + m_v g \sin (\alpha(x_1))$, where $d_1$ is the aerodynamic drag coefficient, $d_2$ is the rolling force coefficient, and $g$ is the gravitational constant. The parameter settings as well as the initial, safety, target sets are illustrated in \rtab{params}. %Intuitively, the safety set $X_S$ as in \rtab{params} means that the vehicle must move slower on the downhill (i.e., $x_1 \in [400, 1000]$) than on the flat road. 

Based on the above setting, we define the symbolic model $\widetilde{\Sigma}^\varepsilon _A$ with $\varepsilon = \eta_x = 1$, $\eta_u = 50$, $M_{\max} = 30$, and \ralg{overall_alg} has been implemented to synthesize the controller. The execution time for the algorithm to be terminated is $20760$s on Windows 10, Intel(R) Core(TM) 2.40GHz, 8GB RAM. 
%The computation time to obtain $\widetilde{\Sigma}^\varepsilon _A$ and synthesize controller for $\widetilde{\Sigma}^\varepsilon _A$ (\ralg{overall_alg}) is $20760$s and $147$s, respectively. \rfig{results1} illustrates the set $\widetilde{P}_S$ obtained by implementing \ralg{overall_alg}. 
%The obtained $\widetilde{P}_S$ in \rfig{results1} may be intuitive; for example, the point $[1300, 15]$ lies \textit{outside} the blue region in \rfig{results1} (i.e., it is not reachable to $X_F$), since the vehicle moves so fast that it cannot enter $X_F$. 
It has been shown that ${X}_0 \subseteq {P}_S$ and thus from \rthm{mainresult} the controller in \req{controller_original} is valid for the system \req{dynamics2}. \rfig{results2} (upper) illustrates some trajectories by applying the resulting controller in \req{controller_original}, and \rfig{results2} (lower) illustrates the applied control inputs from $x_0 = [25; 0]$. The figure shows that all trajectories enter $X_F$ while remaining in $X_S$ for all times, and, moreover, control inputs are updated aperiodically (with $6$ number of communication times) according to the derived self-triggered strategy. For comparisons, we have also implemented \ralg{overall_alg} with $M_{\max} = 1$ (i.e., periodic communication). The resulting trajectory from $x_0 = [25; 0]$ requires $109$ number of communication time steps to achieve reachability. 
Hence, the self-triggered controller (\rfig{results2}) achieves a more communication reduction than the periodic scheme, which shows the effectiveness of the proposed approach. 

\begin{figure}[tbp]
  \begin{center}
   \includegraphics[width=9cm]{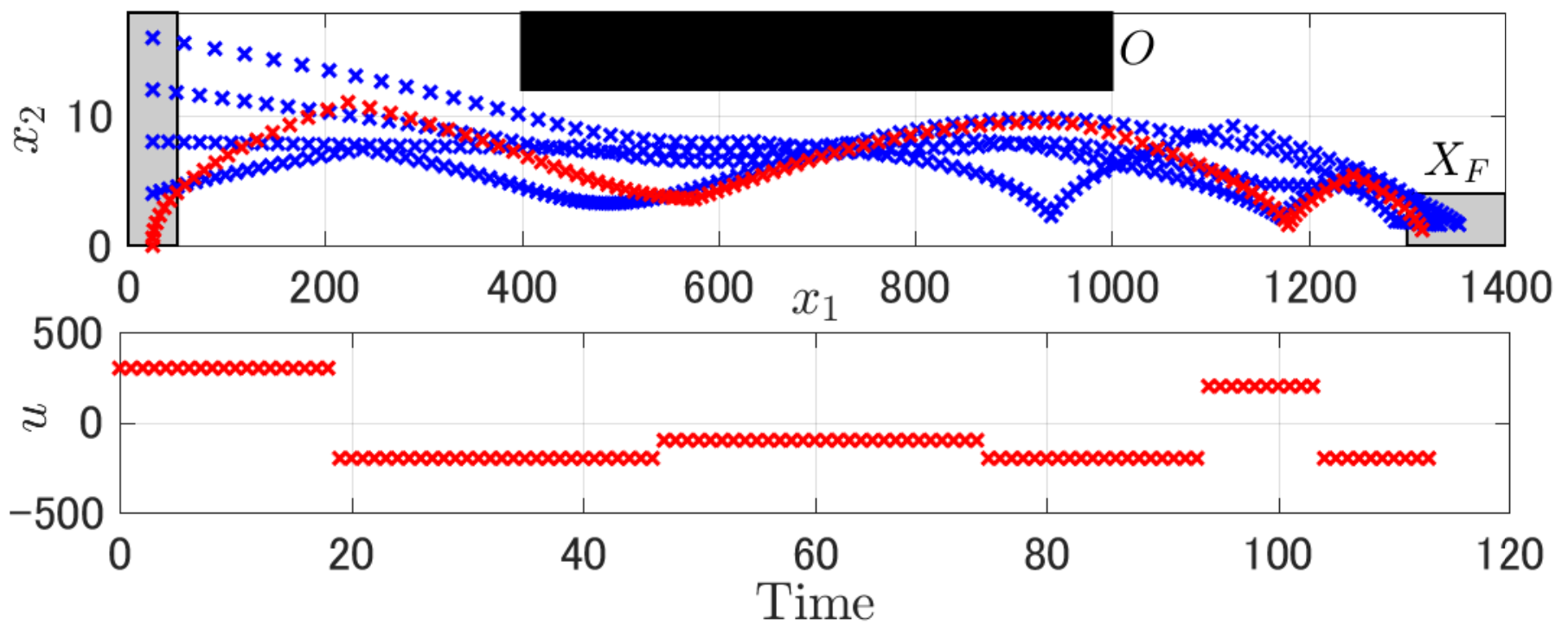}
   \caption{Simulation results. 
The upper figure illustrates some trajectories by applying \req{controller_original} (blue/red dotted lines). The lower figure illustrates the control inputs with $x_0 = [25; 0]$ (corresponding to the red trajectory in the upper figure).}
   \label{results2}
  \end{center}
  \vspace{-0.5cm}
 \end{figure}

\balance
\section{Conclusions and future work}
In this paper, we proposed a symbolic approach to synthesizing a self-triggered controller with reachability and safety specifications. 
%First, we introduced the concept of augmented transition system as the class of transition systems that incorporate the self-triggred strategy. 
The symbolic model was constructed based on the notion of approximate alternating simulation relations, and a controller was synthesized for the symbolic system via a reachability game, which was then refined to a controller for the original control system. 
%Finally, the proposed approach was illustrated through a numerical simulation example. 
%Several future works may be considered. First, our approach requires some high computation demand during the offline phase, due to the construction of symbolic models. %Thus, our approach may not be suitable for high order systems. In view of this, 
%Thus, 
 {Our future work involves providing an efficient algorithm to find suitable quantization parameters $\eta_x$, $\eta_u$, such that the condition ${X}_0 \subseteq {P}_S$ in \rthm{mainresult} is satisfied.}
Moreover, we would like to investigate improving the scalability of constructing symbolic models by employing compositional techniques, e.g., in \cite{adnane2018a,meyer2018b}. %as well as dealing with bandwidth limitation  %Second, our future work involves extending to the case when several network uncertainties, such as network delays and packet losses, are present in NCSs. 
%This might be achieved by employing \cite{borri18a,pola18a}, in which the authors provide a way of constructing symbolic models for NCSs. Finally, our future work involves synthesizing self-triggered controllers that can accommodate formal languages, such as linear temporal logics. 

%\vspace{-0.2cm}

%the method of constructing symbolic models of NCSs with the network uncertainties taken into account.  
%the work in \cite{hoge}, in which the authors provides a way to obtain symbolic models for NCSs. 
%Since our approach employs symbolic models, it has the potential to apply more complex specifications 
%Moreover, the current approach is not clear whether 
%Moreover, our future work involves synthesizing a controller for more complex tasks than the reachability and safety, such as linear temporal logics. 

\end{document}